\DeclareMathOperator{\character}{char}
\DeclareMathOperator{\stack}{ \bowtie }
\newcommand{\weave}{\mathbin{\rotatebox[origin=c]{180}{$\merge$}}}
\newtheorem{thm}{Theorem}
\newtheorem*{thm*}{Theorem}
\newtheorem{lem}[thm]{Lemma}
\newtheorem{coro}[thm]{Corollary}
\newtheorem{defn}{Definition}
\theoremstyle{remark}
\journal{Mathematical Social Sciences}
\begin{document}

\begin{frontmatter}

%% Title, authors and addresses

%% use the tnoteref command within \title for footnotes;
%% use the tnotetext command for theassociated footnote;
%% use the fnref command within \author or \address for footnotes;
%% use the fntext command for theassociated footnote;
%% use the corref command within \author for corresponding author footnotes;
%% use the cortext command for theassociated footnote;
%% use the ead command for the email address,
%% and the form \ead[url] for the home page:
%% \title{Title\tnoteref{label1}}
%% \tnotetext[label1]{}
%% \author{Name\corref{cor1}\fnref{label2}}
%% \ead{email address}
%% \ead[url]{home page}
%% \fntext[label2]{}
%% \cortext[cor1]{}
%% \address{Address\fnref{label3}}
%% \fntext[label3]{}

\title{Cubic Preferences and the Character Admissibility Problem\tnoteref{t1}}
\tnotetext[t1]{This research was supported by the National Science Foundation under Grant No.\ DMS-1262342, which funded a Research Experiences for Undergraduates (REU) program at Grand Valley State University. Any opinions, findings and conclusions or recommendations expressed in this material are those of the author(s) and do not necessarily reflect the views of the National Science Foundation (NSF). Declarations of interest: none}

%% use optional labels to link authors explicitly to addresses:
%% \author[label1,label2]{}
%% \address[label1]{}
%% \address[label2]{}

\author[bb]{Beth Bjorkman} \ead{bjorkman@iastate.edu}
\author[sg]{Sean Gravelle}\ead{sean.gravelle@huskers.unl.edu}
\author[jkh]{Jonathan K. Hodge\corref{cor1}}\ead{hodgejo@gvsu.edu}

\address[bb]{Iowa State University}
\address[sg]{University of Nebraska-Lincoln}
\address[jkh]{Grand Valley State University}

\cortext[cor1]{Corresponding author; mailing address: Department of Mathematics, Grand Valley State University, 1 Campus Drive, Allendale, MI 49401}

\begin{abstract}
%% Text of abstract
In multiple-question referendum elections, the separability problem occurs when a voter's preferences on some questions or proposals depend on the predicted outcomes of others. The notion of separability formalizes the study of interdependence in multidimensional preferences, and the character admissibility problem deals with the construction of voter preferences with given separability structures. In this paper, we develop a graph theoretic approach to the character admissibilty problem, using Hamiltonian paths to generate voter preferences. We apply this method specifically to the hypercube graph, defining the class of cubic preferences. We then explore how the algebraic structure of the group of symmetries of the hypercube impacts the separability structures exhibited by cubic preferences. We prove that the characters of cubic preferences satisfy set theoretic properties distinct from those produced by previous methods, and we define two functions to construct cubic preferences. Our results have potential applications to experimental work involving election simulation.
\end{abstract}

\begin{keyword}
%% keywords here, in the form: keyword \sep keyword

%% PACS codes here, in the form: \PACS code \sep code

%% MSC codes here, in the form: \MSC code \sep code
%% or \MSC[2008] code \sep code (2000 is the default)

referendum elections \sep separability \sep binary preference matrices \sep characters \sep admissibility \sep hypercube \sep Gray code \sep Hamiltonian paths

\end{keyword}

\end{frontmatter}

%% main text

\section{Introduction}\label{intro}

In referendum elections, voters are often required to cast simultaneous ballots on multiple questions or proposals. The \emph{separability problem} \cite{sepproblem} occurs when voters' preferences on some sets of questions depend on the outcomes of others. For example, a voter may support a property tax increase, but only if a proposal to improve roads is also approved. In a simultaneous election, voters have no way to express such interdependencies, which can lead to election outcomes that are unsatisfactory or even paradoxical. One classic example is due to Lacy and Niou \cite{lacyniou}, who demonstrate the possibility of an election in which the winning combination of outcomes is the last choice of \emph{every} voter.

The notion of \emph{separability} formalizes the study of interdependence in multidimensional preferences and is important in many fields,  including economics, social choice theory, operations research, and computer science. Here we focus on the structure of interedependent preferences in multiple-criteria binary decision processes such as referendum elections. In this context, a voter's preferences on a set of questions are said to be \emph{separable} if they do not depend on the outcome of other questions in the election (and \emph{nonseparable} otherwise).

Given a convenient representation of a voter's preferences (we use \emph{preference matrices}), it is relatively straightforward to determine the sets of questions that are separable with respect to that voter. The collection of all such sets is called the voter's \emph{character}. However, the corresponding inverse problem is more complex. In fact, there are no general methods for constructing a preference order with a given character, and it is sometimes impossible to determine whether such a preference order even exists. If such an order does exist, the associated character is said to be \emph{admissible}. Past research has focused on classifying and constructing admissible characters. Bradley, Hodge, and Kilgour \cite{bradley05} proved results implying that admissible characters are always closed under intersections but may not be closed under other set operations. Hodge and TerHaar \cite{hodgeterhaar08} determined all admissible characters for question sets of size 4 or less; they also proved the existence of nontrivial inadmissible characters---that is, collections of sets that are closed under intersections and yet cannot occur as a voter's character---for question sets of size 4 or more. Most recently, Hodge, Krines, and Lahr \cite{hodgekrineslahr09} developed the technique of \emph{preseparable extensions} to construct preferences with certain classes of characters.

In this paper, we introduce a new method that uses vertex-edge graphs to generate preference orders. In Section 2, we introduce this method and apply it the $n$-dimensional hypercube graph, defining a class of preferences, called \emph{cubic preferences}. In Section 3, we prove results that relate the separability structures associated with cubic preferences to the algebraic structure of the hypercube graph. In Section 4, we present methods for constructing cubic preferences and prove results about the resulting characters. Importantly, we show that the characters associated with cubic preferences are distinct from those generated by Hodge, Krines, and Lahr's method of preseparable extensions. We conclude in Section 5 with a brief discussion of the potential applications of our methods to election simulation and the character admissibility problem.

%INCLUDE: Quasi-outline describing what will be in each section. (?)

%%%%%%%%%%%%%%%%%%%%%%%%%%%%%%%%%%%%%%%%%%%%%%%%%%%%%%
%%%%%%%%%%%%%%%%%%%%%%%%%%%%%%%%%%%%%%%%%%%%%%%%%%%%%%
\section{Definitions}

%%%%%%%%%%%%%%%%%%%%%%%%%%%%%%%%%%%%%%%%
%%%%%%%%%%%%%%%%%%%%%%%%%%%%%%%%%%%%%%%%
\subsection{Preference Matrices and Separability}

Throughout this paper, we will limit our attention to decision-making processes involving a finite number of binary (yes-no) decisions, using the language of referendum elections to ground our work in a familiar context. In a referendum election with $n$ yes-no questions, there are $2^n$ possible results for the election as a whole, which we refer to as \emph{outcomes}.  If we let $Q_n = \{1, 2, \ldots, n\}$ denote the \emph{question set},
%\footnote{If the value of $n$ is obvious from context, $Q_n$ may be abbreviated as simply $Q$.} 
then an outcome is simply an element of $\{0, 1\}^n$ (the Cartesian product of $n$ copies of $\{0, 1\}$), which we denote by $X$. For each question, we associate a value of $1$ with an outcome of \emph{yes} and $0$ with an outcome of \emph{no}. Given a subset $S \subseteq Q_n$ with $|S| = m$, we can define an outcome \emph{on} $S$ in a similar manner---namely, as an element of $\{0, 1\}^m$. We use the notation $X_S$ to represent the set of all possible outcomes on $S$.  

%Originally defined as $P_S$ rather than $X_S$, but that looked like a typo, based on the use of the notation later in the paper, so I changed it to $X_S$ to match later usage. Is that correct?
%Also, I took out the references to the preference matrix in this definition, since it didn't seem necessary for the definition, and it seemed well placed here, before introducing preference matrices. However, I've included the original version, commented below, in case you want to switch it back.
%%%%%%%%%%%%%
%\begin{defn} 
%Let $P$ be a preference matrix for an election on $n$ question, $Q=\{1,2,3,\ldots, n\}$, and $S \subseteq Q$. We denote the set of outcomes on $S$ by $P_S$. 
%\end{defn}
%%%%%%%%%%%%%
%Note that, for any $S$, $X_S$ is the set of all $2^{|S|}$ possible binary strings of length $|S|$. For instance, if $S=\{1,3\}\subseteq Q_3$, then $X_S=\{00,01,10,11\}$. Accordingly, we shall sometimes represent $X_S=\{x_1,\ldots,x_m\}$, where $m=2^{|S|}$ and each $x_i$ now represents an outcome on all of $S$.

Consistent with previous research, we use a total order $\succ$ on $X$, displayed in convenient form as a \emph{preference matrix}, to represent the preferences of each voter. In particular, for each voter we list the $2^n$ possible outcomes as the rows of a $2^n \times n$ matrix, with the voter's most preferred outcome as the first row and the voter's least preferred outcome as the last row.

As an example, consider the following preference matrix for an election on $n = 2$ questions:\footnote{The examples involving matrix $A$, here and below, first appeared in \cite{hodgeterhaar08}.}
\[ A=
\begin{pmatrix}
1&0\\
1&1\\
0&1\\
0&0\\
\end{pmatrix}
\]

This matrix represents the preferences of a voter whose most preferred outcome is \emph{yes} on the first question and \emph{no} on the second, and whose least preferred outcome is \emph{no} on both questions. Written horizontally, the corresponding total order is \[ (1,0) \succ (1,1) \succ (0,1) \succ (0,0), \] or simply \[ 10 \succ 11 \succ 01 \succ 00. \] (We will typically omit tuple notation except in cases where it is necessary for clarity.)

A subset $S \subset Q_n$ is said to be \emph{separable} with respect to a given voter if that voter's preferences on questions in $S$ do not depend on the outcome of questions outside of $S$. In other words, we can determine the separability of the set $S$ by successively fixing all possible outcomes on $Q_n - S$ and checking to see if the preferences induced on $S$ are the same for each of these outcomes.

To illustrate, consider the matrix $A$ again, and suppose we restrict our attention to the outcomes in which the result on the second question is $0$. These outcomes correspond to the top and bottom rows of the matrix, inducing an order of $1 \succ 0$ on the first question. Likewise, the middle two rows represent outcomes in which the result on the second question is $1$, and these two rows again induce an order of $1 \succ 0$ on the first question. The fact that these induced orders are the same---regardless of what result is fixed on the second question---indicates that the first question (or, more precisely, the set $\{1\}$) is separable.

In contrast, fixing an result of $0$ on the first question induces an order of $1 \succ 0$ on the second question (as shown by the bottom two rows of $A$), whereas fixing a result of $1$ on the first question induces an order of $0 \succ 1$ on the second question (as shown by the top two rows of $A$). Because these two induced orderings are different, the second question (i.e., the set $\{2\}$) is \emph{not} separable. Intuitively, this voter's preference on the outcome of the second question depends on what happens on the first question. If the first question passes, the voter wants the second to fail. And if the first question fails, the voter wants the second to pass.

For larger question sets, we can consider not only the separability of individual questions but also of sets of questions. Analogous to the previous example, one way to test the separability of a set $S$ is to consider the submatrices induced by fixing various outcomes on $Q_n -S$. To illustrate,
consider the following preference matrix, $B$:

%This one, by the way, I made up myself (though I was clearly using the example from the other paper as a template)

\[B=\begin{pmatrix} 
1 & 0 &1 \\ 
1 & 0 & 0 \\ 
1 & 1 & 0 \\ 
0 & 0 & 0 \\ 
0 & 1 &  1 \\ 
1 & 1 & 1 \\ 
0 & 0 & 1 \\ 
0 & 1 & 0 
\end{pmatrix}.\]

%Suppose we wish to find the character of $B$. 
We will begin, as an example, by determining whether or not the set $\{1,2\}$ is separable. In order to do so, we must compare the induced preference orders on $\{1,2\}$ when the outcomes of $1$ and $0$, respectively, are fixed on the set $\{3\}$. It is convenient to represent these induced preference orders as follows:

\[
B^{[\{3\},1]}= \begin{pmatrix} 1 & 0 \\ 0 & 1 \\ 1 & 1 \\ 0 & 0 \end{pmatrix}, B^{[\{3\},0]}= \begin{pmatrix} 1 & 0 \\ 1 & 1 \\ 0 & 0 \\ 0 & 1 \end{pmatrix}
\]

We see that $B^{[\{3\},1]}$ represents the preference order induced on $\{1,2\}$ when the outcome 1 is fixed on $\{3\}$, and $B^{[\{3\},0]}$ represents the same, but with the outcome 0 fixed $\{3\}$. Since $B^{[\{3\},1]} \neq B^{[\{3\},0]}$, this voter's preferred ordering of the outcomes on the set $\{1, 2\}$ depends on the outcome of question 3, so $\{1, 2\}$ is nonseparable.

This example motivates our general definition of separability. In particular, if $P$ is a preference matrix on a question set $Q_n$, and $S$ is a nonempty, proper subset of $Q_n$, we let $P^{[Q_n-S,x]}$ denote the submatrix formed by fixing the outcome $x$ on $Q_n - S$ (that is, by taking the columns of $P$ that correspond to $S$ and the rows of $P$ that have an outcome of $x$ on $Q_n-S$). Note that $P^{[Q_n-S,x]}$ can itself be viewed as a preference matrix for an election on $S$. We can then define the separability of $S$ as follows: 

%%%%%%%%%%%%
\begin{defn}\label{def-sep} Let $P$ be a preference matrix for the question set $Q_n$, and let $S$ be a nonempty, proper subset of $Q_n$. Then $S$ is said to be \textbf{separable} with respect to $P$ if \[ P^{[Q_n - S,x]}=P^{[Q_n - S,y]}  \text{~for all~}  x, y \in X_{Q_n - S}, \] and \textbf{nonseparable} otherwise. 
\end{defn}
%%%%%%%%%%%%

We consider $Q_n$ and $\emptyset$ to be trivially separable with respect to any preference matrix. Applying Definition \ref{def-sep} to all possible nontrivial subsets of $Q_n$ yields the \emph{character} of the matrix, defined formally below.

%We provide an example to illustrate this definition. Consider again the preference matrix $A$:
%
%\[ A=
%\begin{pmatrix}
%1&0\\
%1&1\\
%0&1\\
%0&0\\
%\end{pmatrix}
%\]
%
%We determine the separability of the subsets of $Q_2$ as follows:
%
%\begin{enumerate}
%	\item This voter prefers the outcome ``yes'' (1) over ``no'' (0) on the first question, regardless of the outcome of the second question, so we say that $\{1\}\subseteq Q_2$ is \emph{separable}.
%	\item This voter prefers ``yes'' over ``no'' on the second question if ``no'' is the result of the first question (i.e., if we fix the outcome 0 on $\{1\}$, the preference order $1\succ 0$ is induced on $\{2\}$); whereas he prefers ``no'' over ``yes'' on the second question if ``yes'' is the result of the first question (i.e., if we fix the outcome 1 on $\{1\}$, the preference order $0\succ 1$ is induced on $\{2\}$). Thus we say that $\{2\}$ is \emph{nonseparable}.
%	\item By convention, the question set $Q_2$ (in general, $Q_n$) and the empty set $\emptyset$ are always considered trivially separable.
%\end{enumerate}
%
%With this information, we may write a list of all the subsets of $Q_2$ which are separable with respect to $A$:
%
%\begin{equation}\label{char1}\{\emptyset,\{1\},\{1,2\}\}\end{equation}
%
%We may perform a similar process on any preference matrix and can thereby compute such a list of all subsets of $Q_n$ which are separable with respect to that matrix.
%This motivates the following definition:

%%%%%%%%%%%%
\begin{defn}\label{def-char}
Let $P$ be a preference matrix for the question set $Q_n$. The \textbf{character} of $P$, denoted $\character(P)$, is the set of all subsets of $Q_n$ that are separable with respect to $P$. If $\character(P) = \mathcal{P}(Q_n)$ (the power set of $Q_n$), then $P$ is said to be \textbf{completely separable}. If $\character(P) = \{\emptyset, Q_n\}$, then $P$ is said to be \textbf{completely nonseparable}.
\end{defn}
%%%%%%%%%%%%

For our previous examples, it is straightforward to verify that $\character(A) = \{\emptyset, \{1\}, \{1, 2\}\}$ and $\character(B) = \{\emptyset, \{1,2,3\}\}$. Thus, the matrix $B$ is completely nonseparable.

Observe that for any $S\in\character(P)$, the definition of separability induces a fixed ordering on $X_S$. As before, we use the $\succ$ symbol to denote this induced ordering (for example $1 \succ 0$), relying on context to indicate that the ordering is on $X_S$ and not $X$.

%Thus, the following usage is well-defined:
%
%%%%%%%%%%%%%
%\begin{defn} Let $P$ be a preference matrix for an election on $n$ question and let $S \in \character(P)$. As $S$ is separable, there is an induced preference order on the outcomes on $X_S$, which we denote as $\succ_S$ so that $ x_1 \succ x_2 \succ \cdots\succ x_{2^{|S|}} $
%\end{defn}
%%%%%%%%%%%%%

In addition to considering the characters of specific preference matrices, it will be convenient for us to use the term \emph{character} to refer to any collection of subsets of $Q_n$. A character $C$ is then said to be \emph{admissible} if there exists a preference matrix $P$ such that $\character(P) = C$. The fact that inadmissible characters exist, and that it is not known how to identify them based on set-theoretic properties of the characters alone, is an interesting problem in itself \cite{hodgeterhaar08}. Our approach to the character admissibility problem centers on constructing preference matrices from Hamiltonian paths in graphs.

%Note that, for larger matrices, the computation of characters can be lengthy as every subset must be checked; however, a computer can easily be made to carry out this task for large numbers of matrices of arbitrary size.
%
%%It looks like we reference Micah elsewhere in the paper, but here's an old footnote just in case we want it again.
%%\footnote{
%%And in fact, a former GVSU student has written just such a program, which has been very useful during this research.
%%}
%
%Furthermore, preference matrices can aid understanding of separability by using submatrices to represent induced preference orders. 
%Thus, preference matrices are a convenient way to represent preference orders and to find characters. We will therefore continue to use them in conjunction with our new graph theoretic model, which we now introduce.

%%%%%%%%%%%%%%%%%%%%%%%%%%%%%%%%%%%%%%%%
%%%%%%%%%%%%%%%%%%%%%%%%%%%%%%%%%%%%%%%%
\subsection{A Graph Theoretic Model}

Recall that, for an election on $n$ yes-or-no questions, there are $2^n$ possible outcomes. Therefore, any graph with $2^n$ vertices can be labeled with these outcomes. Moreover, any Hamiltonian path in such a graph---that is, a path that includes each vertex exactly once---traverses the (labeled) vertices in an order that can be thought of as \emph{generating} a preference matrix. The next definition formalizes this idea. 

%%%%%%%%%%%%
\begin{defn}\label{def-gen}
Let $G$ be a graph on $2^n$ vertices, each labeled with a different outcome for an election on $n$ questions, and let $H$ be a Hamiltonian path on $G$. Let $P$ be the preference matrix whose rows, from top to bottom, are the labels of the vertices of $G$, in the order traversed by $H$. Then $H$ is said to \textbf{generate} $P$.
\end{defn}
%%%%%%%%%%%%

For example, the Hamiltonian path shown in Figure \ref{fig-cube-path} generates preference matrix $M$.

%%%%%%%%%%%%
\begin{center}
\begin{figure}[h!]
\begin{subfigure}{0.45\textwidth}\centering
\includegraphics[scale=.75]{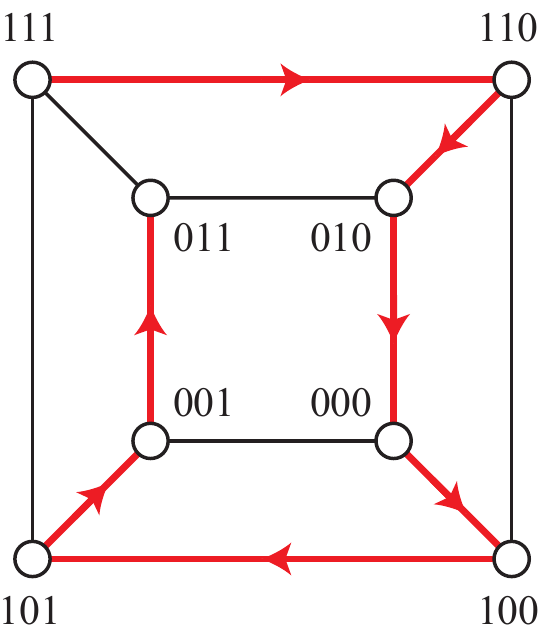}
\end{subfigure}
\begin{subfigure}{0.45\textwidth}\centering
\[M=\begin{pmatrix} 1 & 1 &1 \\ 1 &1 & 0 \\ 0 & 1 & 0 \\ 0 & 0 & 0 \\ 1 & 0 &  0 \\ 1 & 0 & 1 \\ 0 & 0 & 1 \\ 0 & 1 & 1 \end{pmatrix}\]
\end{subfigure}
\caption{Generating a preference order with a Hamiltonian path}\label{fig-cube-path}
\end{figure}
\end{center}
%%%%%%%%%%%%

It is clear that, for any given graph on $2^n$ vertices, each Hamiltonian path will generate a unique preference matrix. However, graphs often contain multiple Hamiltonian paths, thereby generating multiple preference matrices. This in turn motivates the following definitions:

%%%%%%%%%%%%
\begin{defn}
Given a preference matrix $P$ for an election on $n$ questions and a labeled graph $G$ with $2^n$ vertices, we say that $G$ and $P$ are \textbf{consistent} if and only if $G$ contains a Hamiltonian path that generates $P$. (We may also say that $G$ is consistent with $P$ or that $P$ is consistent with $G$.) The set of all preference matrices consistent with $G$ is called the \textbf{consistency set} of $G$, denoted by $\mathcal{C}(G)$.
\end{defn}
%%%%%%%%%%%%

%%%%%%%%%%%%%
%\begin{defn}
%Given a labeled graph $G$ with $2^n$ vertices, the set of all preference matrices consistent with $G$ is called the \textbf{consistency set} of $G$. We denote the consistency set of $G$ by $\mathcal{C}(G)$.
%\end{defn}
%%%%%%%%%%%%%

Note that the consistency set of a graph $G$ depends on both the structure of $G$ and its labeling. Indeed, it is possible for two different labelings of the same graph to yield entirely different consistency sets.

It is also important to note that finding the entire consistency set of a graph is, in general, a computationally demanding task---particularly for larger graphs. Indeed, simply determining whether or not a Hamiltonian path \emph{exists} on a given graph is an NP-complete problem \cite{wolfram-ham}.
However, there are some interesting classes of graphs for which it is possible to enumerate all Hamiltonian paths; in particular, it is helpful when the structure of a graph suggests a natural labeling corresponding to the outcomes of an appropriately-sized election.

%Another term we don't use in the paper--I suppose because we only ended up looking at one type of graph.
%\begin{defn}
%If the consistency sets of two labeled graphs are the same, we say that the graphs are \textbf{equivalent}.
%\end{defn}

One such example is the $n$-dimensional hypercube graph (with $2^n$ vertices), labeled so that each pair of adjacent vertices differ on exactly one question (or \emph{bit}). Since this labeling is often called the \emph{Gray code} labeling \cite{wolfram-gray}, we will use the notation $G_n$ to denote such graphs, which we will call \emph{Gray graphs}. Elements of the consistency set $\mathcal{C}(G_n)$ will then be referred to as \emph{cubic} preferences.

To illustrate, note that the graph shown in Figure \ref{fig-cube-path} is in fact $G_3$, and the matrix $C$ is one of many elements of $\mathcal{C}(G_3)$. For an arbitrary $P\in\mathcal{C}(G_n)$, each row of $P$ differs by exactly one bit from the rows immediately preceding and following it. Therefore, given a subset $S$ of $Q_n$, we say that $S$ \emph{changes} from one row to the next if the column containing the bit that differs between the two rows is in $S$. Moreover, if the outcomes on $S$ for these two differing rows are $x$ and $y$ respectively, with $x$ above $y$, we say that $S$ \emph{changes from} $x$ \emph{to} $y$.

With these definitions, we may now begin to study cubic preferences in earnest. We begin by partitioning $\mathcal{C}(G_n)$ into natural equivalence classes, and then enumerate all elements of $\mathcal{C}(G_n)$ for small $n$.

%with a few illustrative examples of the consistency sets for the smallest $n$, followed by an examination of the connection between the characters we find and the algebraic structure of the graph which generates them. We then show properties which limit the possible characters generated by cubic preferences, as a first step to characterizing preferences in this class. Finally, we examine cubic preferences more concretely, defining two interesting functions which construct cubic preference matrices with particular characters. We use these functions to better understand the characters present in cubic preferences generally, and present related combinatorial results.

%%%%%%%%%%%%%%%%%%%%%%%%%%%%%%%%%%%%%%%%%%%%%%%%%%%%%%
%%%%%%%%%%%%%%%%%%%%%%%%%%%%%%%%%%%%%%%%%%%%%%%%%%%%%%
\section{Properties of Cubic Preferences}\label{sec-props}

%%%%%%%%%%%%%%%%%%%%%%%%%%%%%%%%%%%%%%%%
%%%%%%%%%%%%%%%%%%%%%%%%%%%%%%%%%%%%%%%%
\subsection{Character and Path Classes}\label{sec-compute}

In order to construct $\mathcal{C}(G_n)$, it will be convenient for us to consider characters that differ only by a permutation of the questions to be equivalent, or \emph{isomorphic}. We define character isomorphism formally below.

%we adopt the useful concept of \emph{character isomorphism} (also used by \cite{hodgeterhaar08}) and the resulting notion of a \emph{character class} (which we introduce here). Character isomorphism is a natural formalization of the concept that permuting the columns of a preference matrix is equivalent to changing the numbering of questions in an election. Since such a change does not affect separability, it is natural to consider characters to be ``equivalent'' if they result from such permutations, as described in the following definitions.

%%%%%%%%%%%%
\begin{defn}\label{char-iso}
Let $\sigma \in S_n$ (the symmetric group of degree $n$), and let $C$ be any collection of subsets of $Q_n$. Then we define \[ \sigma(C)=\{\sigma(S):S\in C\}, \] where $\sigma(S)=\{\sigma(i):i\in S\}$. Moreover, two characters $C_1$ and $C_2$ are said to be \textbf{isomorphic} if and only if there exists some $\sigma \in S_n$ such that $C_2=\sigma(C_1)$.
\end{defn}
%%%%%%%%%%%%

Since character isomorphism is clearly an equivalence relation, it partitions any set of characters into equivalence classes. Likewise, character isomorphism also induces an equivalence relation on any set of preference matrices. The corresponding equivalence classes are called \emph{character classes}, defined formally below.

%%%%%%%%%%%%
\begin{defn}\label{def-char-class}
Let $\mathcal{M}$ be a collection of preference matrices, and let $\sim$ be the equivalence relation defined on $\mathcal{M}$ by $P_1 \sim P_2$ if and only if $\character(P_1)$ and $\character(P_2)$ are isomorphic. A \textbf{character class} is an equivalence class of the $\sim$ relation.
\end{defn}
%%%%%%%%%%%%

Partitioning preference matrices into character classes simplifies the task of enumerating the characters corresponding to a given graph's consistency set. In the case of $\mathcal{C}(G_n)$, this set can be further partitioned---into what we will call \emph{path classes}---by appealing to the algebraic properties of the automorphism group of $G_n$. % in addition to enlightening us more clearly about the underlying separability structures. 

The automorphism group of the $n$-dimensional hypercube is the hyperoctahedral group, which we represent as the subgroup $H_n$ of $GL_n(\mathbb{Z})$ consisting of all $n \times n$ matrices whose only nonzero entries are $\pm 1$ and for which each row and each column contains exactly one nonzero entry. With a small adjustment to the entries of the matrices in $\mathcal{C}(G_n)$---specifically, replacing each $0$ with $-1$---we can show that $H_n$ acts on $\mathcal{C}(G_n)$ by right multiplication, which intuitively corresponds to permuting and taking bitwise complements of the columns. As an example, let \[ M = \begin{pmatrix} 0 & 0 & -1 \\ 1 & 0 & 0 \\ 0 & 1 & 0 \end{pmatrix} \text{~~~~~and~~~~~} P = \begin{pmatrix} 1 & 1 & 1 \\ 1 & 1 & 0 \\ 1 & 0 & 0 \\ 1 & 0 & 1 \\ 0 & 0 & 1 \\ 0 & 1 & 1 \\ 0 & 1 & 0 \\ 0 & 0 & 0  \end{pmatrix}. \] Replacing each $0$ with $-1$ in $P$ and then multiplying on the right by $M$ yields \[ \begin{pmatrix} 1 & 1 & 1 \\ 1 & 1 & -1 \\ 1 & -1 & -1 \\ 1 & -1 & 1 \\ -1 & -1 & 1 \\ -1 & 1 & 1 \\ -1 & 1 & -1 \\ -1 & -1 & -1  \end{pmatrix} \begin{pmatrix} 0 & 0 & -1 \\ 1 & 0 & 0 \\ 0 & 1 & 0 \end{pmatrix} = \begin{pmatrix} 1 & 1 & -1 \\ 1 & -1 & -1 \\ -1 & -1 & -1 \\ -1 & 1 & -1 \\ -1 & 1 & 1 \\ 1 & 1 & 1 \\ 1 & -1 & 1 \\ -1 & -1 & 1  \end{pmatrix}. \] Now replacing each $-1$ with $0$ in the product yields the matrix \[ P' = \begin{pmatrix} 1 & 1 & 0 \\ 1 & 0 & 0 \\ 0 & 0 & 0 \\ 0 & 1 & 0 \\ 0 & 1 & 1 \\ 1 & 1 & 1 \\ 1 & 0 & 1 \\ 0 & 0 & 1  \end{pmatrix}. \]

Therefore, we see that the action of $M$ on $P$ was to apply the permutation $(132)$ to the columns of $P$ and then take the bitwise complement of the new third column. (Note that the order in which these two components of the action are performed is important.)

Formally, for any matrix $P$ with entries consisting only of $0$ and $\pm 1$, let $\overline{P}$ denote the matrix obtained by replacing each $0$ with $-1$ and each $-1$ with $0$. Note that $\overline{\overline{P}} = P$. Then define the map $\phi: \mathcal{C}(G_n) \times H_n \rightarrow \mathcal{C}(G_n)$ by $\phi(P, M) = \overline{\overline{P}M}$. 

\begin{lem}\label{action}
The map $\phi$ as defined above is a (right) group action of $H_n$ on $\mathcal{C}(G_n)$.
\end{lem}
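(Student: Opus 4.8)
The plan is to verify the three requirements for $\phi$ to be a well-defined right group action: (i) closure, namely $\phi(P,M) \in \mathcal{C}(G_n)$ for every $P \in \mathcal{C}(G_n)$ and $M \in H_n$; (ii) the identity axiom $\phi(P,I) = P$; and (iii) the compatibility axiom $\phi(\phi(P,M_1),M_2) = \phi(P,M_1 M_2)$. Of these, (ii) and (iii) are short formal computations, while (i) carries the real content and is where I expect the main obstacle to lie.

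For closure, I would first unwind what $\overline{\overline{P}M}$ does at the level of outcomes. Since $\overline{P}$ has entries in $\{-1,+1\}$ and $M$ is a signed permutation matrix (one nonzero entry $\pm 1$ per row and per column), each column of $\overline{P}M$ equals $\pm$ a column of $\overline{P}$; thus right multiplication by $M$ permutes the columns of $\overline{P}$ and negates some of them. After applying the bar operation (which sends $-1 \mapsto 0$ and fixes $+1$), this amounts to permuting the columns of $P$ by the permutation underlying $M$ and taking the bitwise complement of exactly those columns where $M$ carries a $-1$. In other words, $\phi(\cdot,M)$ applies a single map $f \colon \{0,1\}^n \to \{0,1\}^n$ to each row of $P$, where $f$ permutes coordinates and complements a fixed set of coordinates.

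The key step is then to observe that $f$ is a graph automorphism of $G_n$: it is a bijection of $\{0,1\}^n$ (coordinate permutation and coordinate complementation are each bijections), and it preserves Hamming distance, since permuting coordinates clearly does, and complementing the same coordinate set in two outcomes leaves unchanged the set of coordinates on which they agree. Because $P \in \mathcal{C}(G_n)$, its rows list all $2^n$ outcomes with consecutive rows differing in exactly one bit; applying $f$ row-by-row preserves both the distinctness of the rows (bijectivity) and the unit Hamming distance between consecutive rows (distance preservation). Hence the rows of $\phi(P,M)$ again trace a Hamiltonian path in $G_n$, so $\phi(P,M) \in \mathcal{C}(G_n)$, establishing closure.

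For the remaining axioms, the identity case is immediate: $\phi(P,I) = \overline{\overline{P}\,I} = \overline{\overline{P}} = P$, using $\overline{\overline{P}} = P$. For compatibility I would use that the bar operation is an involution together with associativity of matrix multiplication. Noting that $\overline{\phi(P,M_1)} = \overline{P}M_1$ (since $\overline{\overline{X}} = X$ and $\overline{P}M_1$ has entries in $\{-1,+1\}$), I compute \[ \phi(\phi(P,M_1),M_2) = \overline{\overline{\phi(P,M_1)}\,M_2} = \overline{(\overline{P}M_1)M_2} = \overline{\overline{P}(M_1 M_2)} = \phi(P, M_1 M_2). \] I expect the only genuine obstacle to be the closure argument above, specifically making precise that right multiplication by $M$ realizes a hypercube automorphism; the two axioms then follow formally once the involution identity $\overline{\overline{X}} = X$ and the column-permutation/complement interpretation of $M$ are in hand.
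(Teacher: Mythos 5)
Your proof is correct, and it actually does more than the paper's own proof. The paper's argument for this lemma checks only the identity axiom ($\phi(P,I_n)=\overline{\overline{P}I_n}=P$) and compatibility (via associativity of matrix multiplication and the involution $\overline{\overline{X}}=X$), exactly as in your parts (ii) and (iii); those computations coincide with yours line for line. What the paper does \emph{not} prove formally is your part (i), the closure/well-definedness claim that $\overline{\overline{P}M}$ again lies in $\mathcal{C}(G_n)$: the paper merely asserts this informally in the prose preceding the lemma (``we can show that $H_n$ acts on $\mathcal{C}(G_n)$ by right multiplication, which intuitively corresponds to permuting and taking bitwise complements of the columns''). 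Your argument fills that gap cleanly --- identifying the action of $M$ with a coordinate permutation composed with complementation of a fixed coordinate set, noting that such a map is a Hamming-distance-preserving bijection of $\{0,1\}^n$, hence an automorphism of $G_n$, and concluding that it carries Hamiltonian paths to Hamiltonian paths. This is precisely the step with mathematical content (it is also where the hyperoctahedral group enters as the automorphism group of the hypercube, which the paper invokes but never ties back to the definition of $\phi$), so your version is the more complete proof; the paper's is shorter only because it leaves that step implicit.
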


\begin{proof} By the definition of a group action, we must establish that $\phi$ satisfies both the identity and compatibility (associativity) axioms. For the former, let $I_n$ denote the identity matrix.  Then for all $P \in \mathcal{C}(G_n)$, \[ \phi(P, I_n) = \overline{\overline{P} I_n} = \overline{\overline{P}} = P.\]  For compatibility, note that if $M_1$ and $M_2$ are elements of $H_n$, then associativity of matrix multiplication implies that \begin{align*} \phi(P, M_1 M_2) &= \overline{\overline{P} (M_1 M_2)} \\ &= \overline{(\overline{P} M_1) M_2} \\ &= \overline{\overline{\phi(P, M_1)}M_2} \\ &= \phi(\phi(P, M_1), M_2) \end{align*} for all $P \in \mathcal{C}(G_n)$. This completes the proof.\end{proof}

We will use standard $\cdot$ notation to denote the action defined by $\phi$, abrreviating $\phi(P, M)$ by $P \cdot M$. Recall that, for a group $G$ acting on a set $X$, the \emph{orbit} of an element $x\in X$ is the set $\{x\cdot g : g\in G\}.$ In the case of the hyperoctahderal group $H_n$ acting on $\mathcal{C}(G_n)$, the orbits include matrices generated by paths that differ from one another only by symmetries of the hypercube.

%%%%%%%%%%%%
\begin{defn}\label{def-pathclass}
For the group action $\phi$, the orbits of $\mathcal{C}(G_n)$ under $\phi$ are called \textbf{path classes} of $G_n$.
\end{defn}
%%%%%%%%%%%%

The next theorem elucidates the relationship between character classes and path classes.

%%%%%%%%%%%%
\begin{thm}\label{path-class-proof}
Let $A, B \in \mathcal{C}(G_n)$ be in the same path class. Then $A$ and $B$ are also in the same character class. 
\end{thm}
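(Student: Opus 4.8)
The goal is to show that if $A$ and $B$ lie in the same path class, then $\character(A)$ and $\character(B)$ are isomorphic characters. Since $A$ and $B$ are in the same orbit under $\phi$, there exists some $M \in H_n$ with $B = A \cdot M$. The natural strategy is to decompose $M$ into its two component actions---column permutation and bitwise complementation of selected columns---and track the effect of each on the character. The plan is to show that permuting columns by $\sigma$ sends $\character(A)$ to $\sigma(\character(A))$, yielding an isomorphic character by definition, while complementing columns leaves the character entirely unchanged. Composing these gives $\character(B) = \sigma(\character(A))$ for the permutation component $\sigma$, which is exactly the definition of character isomorphism from Definition \ref{char-iso}.

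First I would fix notation by writing any $M \in H_n$ as a product $M = D P_\sigma$, where $P_\sigma$ is the permutation matrix corresponding to some $\sigma \in S_n$ and $D$ is a diagonal matrix with $\pm 1$ entries encoding which columns get complemented. The worked example in the excerpt suggests the action is really ``permute, then complement,'' so I would be careful about the order and about whether the permutation applied to the matrix is $\sigma$ or $\sigma^{-1}$; getting this bookkeeping right is essential but routine. The key claim to establish is a pair of lemmas (or two halves of one argument): (i) if $B$ is obtained from $A$ by permuting columns according to $\sigma$, then $S$ is separable with respect to $A$ if and only if $\sigma(S)$ is separable with respect to $B$, so that $\character(B) = \sigma(\character(A))$; and (ii) if $B$ is obtained from $A$ by complementing one or more columns, then $\character(B) = \character(A)$.

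For part (i), I would argue directly from Definition \ref{def-sep}. Permuting the columns of a preference matrix by $\sigma$ simply relabels the questions, so the submatrix $B^{[Q_n - \sigma(S), x]}$ is, column-for-column, a relabeled copy of $A^{[Q_n - S, x']}$ for the correspondingly relabeled fixed outcome; hence the equality-of-induced-orders condition holds for $\sigma(S)$ in $B$ exactly when it holds for $S$ in $A$. For part (ii), the crucial observation is that complementing a column does not change which rows agree or disagree on any given bit---it only swaps the roles of $0$ and $1$ on that question. Fixing outcome $x$ on $Q_n - S$ in the complemented matrix selects the same rows as fixing the complemented outcome $\bar{x}$ in the original, and the induced order on $S$ is identical (or uniformly flipped on a complemented coordinate within $S$, in the same way for every fixed outcome). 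Since separability of $S$ asks only whether all these induced orders \emph{agree}, not what they are, complementation preserves separability of every $S$.

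The main obstacle I anticipate is part (ii): one must verify carefully that complementing a column inside $S$ does not spuriously create or destroy separability. The subtle point is that a complement applied to a column in $S$ flips the induced order on that coordinate \emph{consistently} across all fixed outcomes on $Q_n - S$, so the collection of induced orders remains all-equal if and only if it was all-equal before. I would handle this by noting that complementation of any set of columns commutes in the appropriate sense with the row-selection-and-restriction operation defining $P^{[Q_n - S, x]}$, reducing both subcases to a direct comparison. Once both parts are in hand, the theorem follows immediately by applying them to the factorization $M = D P_\sigma$.
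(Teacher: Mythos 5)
Your proposal is correct and follows essentially the same route as the paper's proof: factor the monomial matrix $M \in H_n$ into a permutation part and a sign (complementation) part, observe that the permutation part transforms the character by $\sigma$ (giving isomorphism per Definition~\ref{char-iso}) while complementing columns preserves the character exactly, and compose. The only difference is the order of factorization ($M = D P_\sigma$ versus the paper's $M_\sigma M_\tau$), which you correctly flag as routine bookkeeping, and your proposal actually spells out the two preservation claims in more detail than the paper does.
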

%%%%%%%%%%%%

%%%%%%%%%%%%
\begin{proof}
First note that any element $M$ of $H_n$ can be written as a product $M_{\sigma} M_{\tau}$, where $M_\sigma$ is a permutation matrix, and $M_{\tau}$ is a signature matrix---that is, a matrix of the form \[ M_{\tau} = \begin{pmatrix} \pm 1 & 0 & \cdots & 0 & 0 \\ 0 & \pm 1 & \cdots & 0 & 0 \\ \vdots & \vdots & \ddots & \vdots & \vdots \\ 0 & 0 & \cdots & \pm 1 & 0 \\  0 & 0 & \cdots & 0 & \pm 1 \end{pmatrix}. \]

Since $A$ and $B$ are in the same path class, there exists $M = M_{\sigma} M_{\tau} \in H_n$ such that $A \cdot M = B$.  Let $C = \character(A)$, and let $C' = \character(A \cdot M_\sigma)$. Since $M_\sigma$ is a permutation matrix, $C$ and $C'$ are isomorphic. Moreover, $M_\tau$ acts on $ A \cdot M_\sigma$ by taking bitwise complements of some subset of the columns of $A \cdot M_\sigma$---an action that has no impact on separability and therefore preserves character. It follows that \begin{align*} \character(B) &= \character(A \cdot M) \\ &= \character(A \cdot (M_\sigma M_\tau)) \\&= \character((A \cdot M_\sigma) \cdot M_\tau) \\ &= \character(A \cdot M_\sigma) \\&= C', \end{align*} which proves that $A$ and $B$ are in the same character class.
\end{proof}
%%%%%%%%%%%%

Note that the converse of Theorem \ref{path-class-proof} is not true; that is, a character class may contain several path classes (as shown in Tables \ref{tableG4p} and \ref{tableG5p}). In general, the collection of path classes is a refinement of the partition of $\mathcal{C}(G_n)$ formed by character classes.  Moreover, each of the path classes for a given $G_n$ has the same cardinality, as demonstrated by the following results.

%\begin{thm}\label{path-class-building-blocks}
%Every matrix in $\mathcal{C}(G_n)$ belongs to exactly one path class. Furthermore, the path classes of $\mathcal{C}(G_n)$ partition the character classes of $\mathcal{C}(G_n)$.
%\end{thm}

%%%%%%%%%%%%
\begin{lem}\label{gray-col}
For every preference matrix $P$, the columns of $P$ are distinct. Furthermore, no pair of columns of $P$ are bitwise complements of each other.
\end{lem}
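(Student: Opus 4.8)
The plan is to reduce both claims to a single structural fact about preference matrices: by definition, the $2^n$ rows of $P$ are exactly the $2^n$ distinct outcomes in $\{0,1\}^n$, so every binary string of length $n$ appears as a row of $P$ exactly once. In particular, for any prescribed pattern on a small set of coordinates there is a row realizing it. No appeal to the Gray-code labeling or to the Hamiltonian-path structure is needed; the statement holds for arbitrary preference matrices precisely because their row sets are complete. I would therefore open the proof by recording this completeness observation explicitly, since it is the engine behind both parts.

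To show the columns are distinct, I would argue by contradiction. Suppose columns $i$ and $j$ agree in every row for some $i \neq j$. Because every outcome occurs as a row, there is in particular a row $x \in \{0,1\}^n$ with $x_i = 1$ and $x_j = 0$. In that row the $i$th and $j$th entries differ, contradicting the assumption that columns $i$ and $j$ coincide everywhere. Hence all columns are pairwise distinct.

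For the second claim I would proceed in the same spirit. Suppose column $i$ were the bitwise complement of column $j$, meaning $P_{r,i} = 1 - P_{r,j}$ for every row index $r$. Choosing the row $x$ with $x_i = x_j = 0$—again available since all outcomes appear—gives $P_{x,i} = 0 = P_{x,j}$, so that $P_{x,i} \neq 1 - P_{x,j}$, a contradiction. Therefore no column is the bitwise complement of another.

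I do not expect a genuine obstacle here; the only thing to get right is the observation that drives the argument, namely that completeness of the row set guarantees the two witness rows used above. For $n = 1$ the complement statement is vacuous (there is a single column), while for $n \geq 2$ the required witnesses always exist, so the lemma holds uniformly. I would close by noting that this lemma is what licenses the later claim that the $H_n$-action, built from column permutations and column complementations, is free, so stating it in full generality for all preference matrices is exactly what is needed downstream.
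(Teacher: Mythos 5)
Your proof is correct and takes essentially the same approach as the paper: both arguments rest on the single fact that a preference matrix contains every outcome in $\{0,1\}^n$ exactly once as a row, so a witness row distinguishing (for identical columns) or agreeing on (for complementary columns) the two coordinates in question must exist, yielding a contradiction. The paper's version is just more terse, phrasing this as ``none of the outcomes that differ on the corresponding questions occur in $P$,'' while you spell out the explicit witness rows.
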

%%%%%%%%%%%%

%%%%%%%%%%%%
\begin{proof}
Suppose two columns of $P$ are identical. Then none of the outcomes that differ on the corresponding questions occur in $P$, and thus $P$ is not a valid preference matrix. A similar argument shows that no pair of columns of $P$ are bitwise complements.
\end{proof}
%%%%%%%%%%%%

%%%%%%%%%%%%
\begin{thm}\label{orbit-partition}
The path classes of $\mathcal{C}(G_n)$ partition $\mathcal{C}(G_n)$ into equal-sized subsets. Furthermore, each path class has cardinality $|H_n| = 2^n \cdot n!$, and so $\mathcal{C}(G_n)$ has exactly $\frac{|\mathcal{C}(G_n)|}{2^n\cdot n!}$ path classes.
\end{thm}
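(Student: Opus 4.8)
The plan is to recognize the path classes as the orbits of the group action $\phi$ from Lemma~\ref{action}, and then to reduce the entire statement to a single claim: that $\phi$ is \emph{free}, meaning the only element of $H_n$ that fixes a given $P \in \mathcal{C}(G_n)$ is the identity. Once freeness is established, everything else follows from standard facts. The orbits of any group action automatically partition the set on which it acts, which gives the partition claim. For the cardinality, the orbit--stabilizer theorem says that $|P \cdot H_n| = |H_n| / |\mathrm{Stab}(P)|$; freeness forces each stabilizer to be trivial, so every orbit has size exactly $|H_n|$. A direct count of signed permutation matrices---$n!$ choices for the underlying permutation and two sign choices for each of the $n$ nonzero entries---gives $|H_n| = 2^n \cdot n!$. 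Dividing the total $|\mathcal{C}(G_n)|$ by this common orbit size then yields the stated number of path classes.

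The real work is therefore in proving freeness, and here is how I would carry it out. First I would pass to the $\pm 1$ encoding by writing $Q = \overline{P}$, so that the fixed-point equation $P \cdot M = \overline{\overline{P} M} = P$ becomes simply $QM = Q$, using that the overline map is an involution and hence a bijection. Next I would read this matrix equation column by column. Because each $M \in H_n$ has exactly one nonzero entry, say $\varepsilon_k = \pm 1$ lying in row $j_k$, in its $k$th column, the $k$th column of $QM$ is precisely $\varepsilon_k$ times the $j_k$th column of $Q$. The equation $QM = Q$ thus asserts, for every $k$, that the $k$th column of $Q$ equals $\varepsilon_k$ times its $j_k$th column.

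At this point I would invoke Lemma~\ref{gray-col}, which---after translating bitwise complementation into negation in the $\pm 1$ encoding---guarantees that the columns of $Q$ are distinct and that no column is the negative of another; note also that no column can be its own negative, since its entries are all $\pm 1$ and hence nonzero. If $\varepsilon_k = 1$, then column $k$ equals column $j_k$, so distinctness forces $j_k = k$. If instead $\varepsilon_k = -1$, then column $k$ is the negative of column $j_k$: the case $j_k \neq k$ is ruled out by the no-complementary-columns condition, and the case $j_k = k$ would make column $k$ its own negative, which is impossible. Hence $\varepsilon_k = 1$ and $j_k = k$ for every $k$, which says exactly that $M = I_n$. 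This completes the freeness argument, and I expect this column-by-column case analysis, together with the correct translation of Lemma~\ref{gray-col} into the $\pm 1$ setting, to be the main obstacle; the remaining appeals to orbit--stabilizer and to the order of $H_n$ are routine.
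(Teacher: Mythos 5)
Your proof is correct, and its overall skeleton matches the paper's: both identify the path classes as orbits, invoke the orbit--stabilizer theorem together with $|H_n| = 2^n \cdot n!$, and reduce everything to showing that each stabilizer is trivial, with Lemma~\ref{gray-col} as the decisive ingredient. Where you diverge is in how triviality of the stabilizer is established. The paper factors $M = M_\sigma M_\tau$ into a permutation matrix times a signature matrix and argues at the level of that decomposition: if $M_\sigma \neq I_n$, then $\overline{P}M_\sigma \neq \overline{P}$ by column-distinctness, yet $\overline{P}M_\sigma$ and $\overline{P}$ would differ only by complementation of columns, forcing two columns of $P$ to be complementary---a contradiction. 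You instead read the fixed-point equation $QM = Q$ column by column, using the fact that each column of a signed permutation matrix has a single nonzero entry $\varepsilon_k$ in some row $j_k$, and then split on the sign of $\varepsilon_k$. The two arguments are equivalent in substance, but yours buys a bit of extra completeness: the paper's written proof only derives $M_\sigma = I_n$ and then asserts $|\mathcal{G}_P| = 1$, leaving implicit the (easy) step that a nontrivial signature part $M_\tau$ is also impossible; your case $\varepsilon_k = -1$ with $j_k = k$ handles exactly that point explicitly, via the observation that no $\pm 1$ column can equal its own negative. Your version also avoids needing the decomposition $M = M_\sigma M_\tau$ as a separate structural fact about $H_n$, which the paper reuses from the proof of Theorem~\ref{path-class-proof}.
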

%%%%%%%%%%%%

%%%%%%%%%%%%
\begin{proof}
For $P \in \mathcal{C}(G_n)$, we denote its path class as the orbit $\mathcal{O}_P$. Furthermore, we denote the stabilizer subgroup of $P$ in $H_n$ as \[ \mathcal{G}_P = \{  M \in H_n : P \cdot M = P \}. \] We wish to show that, for all $P \in \mathcal{C}(G_n)$, $|\mathcal{O}_P|=2^nn!$. By the Orbit-Stabilizer Theorem and Lagrange's Theorem, \[ |\mathcal{O}_P| = |H_n : \mathcal{G}_P| =  |H_n| / |\mathcal{G}_P|. \] Since we know that $|H_n|= 2^n\cdot n!$, it suffices to show that for all $P \in \mathcal{C}(G_n)$, $\mathcal{G}_P = \{I_n\}$, where $I_n$ denotes the identity matrix.

Let $P \in \mathcal{C}(G_n)$ and suppose $P \cdot M = P$ for some $M \in H_n$. Then $\overline{\overline{P}M} = P$, which implies that $\overline{P}M = \overline{P}$. Now let $M = M_\sigma M_\tau$, where, as before, $M_\sigma$ is a permutation matrix and $M_\tau$ is a signature matrix. Then $\overline{P}M_\sigma M_\tau = \overline{P}$. 

Now suppose $M_\sigma \neq I_n$. Then $\overline{P} M_\sigma \neq \overline{P}$; otherwise, $P$ would have two identical columns, in violation of Lemma \ref{gray-col}. But since we also know that $(\overline{P}M_\sigma)M_\tau = \overline{P}$, this means that $\overline{P}M_\sigma$ and $\overline{P}$ differ only by bitwise complements of columns. This, however, implies that $P$ has two columns that are bitwise complements of each other, again a contradiction to Lemma \ref{gray-col}. It follows that $M_\sigma = I_n$. Thus, $|\mathcal{G}_P| = 1$, from which the result follows.
\end{proof}
%%%%%%%%%%%%

%We recapitulate some of the above results more succinctly in the following corollary:

%%%%%%%%%%%%

%%%%%%%%%%%%

These results are important for a number of reasons. First, they show how the separability structures within cubic preferences are related to the hypercube graphs that generate these preferences. Second, they help us to conceive of the consistency set of $G_n$ in a more organized fashion by partitioning it into equal-sized sets with known separability properties. 

%The following tables give the full list of character classes present in each consistency set, along with the number of matrices and path classes in each. Finally, as a reference, we include for each $n$ the size of the associated hyperoctahedral group; recall that this is equal to the number of matrices in each path class.

%We now expand upon the results of Section \ref{sec-compute} in light of path classes, presenting the breakdown of the first four nontrivial consistency sets (i.e., $n=1$ excluded). For the $n=2$ case, we computed the matrices and their characters by hand. For the $n=3$ case, we found a list of the Hamiltonian paths on the 3-cube on Wolfram Alpha. We then found the character of each matrix using the separability tester program written by Dr. Jonathan Hodge.  For the $n=4$ case, we programmed a computer to find all the matrices and calculate their separability. For the $n=5$ case, we programmed a computer to find one matrix in each path class (because of limitations in time and computing power) and calculate their separability.\footnote{We would like to take this opportunity to thank Micah TerHaar, the talented computer programmer who wrote a computer program for us which can test the separability of numerous matrices at once.}

\subsection{Enumeration by Direct Computation}

For small values of $n$, direct computation can be used to enumerate the elements of $\mathcal{C}(G_n)$. Tables \ref{tableG2p}--\ref{tableG5p} list the total number of matrices in $\mathcal{C}(G_n)$, for $2 \leq n \leq 5$, divided into character and path classes. The computations were performed by computer, using a Python script to generate preference matrices in $\mathcal{C}(G_n)$ (at least one per path class) and a program developed by TerHaar (one of the first researchers to study the character admissibility problem; see \cite{hodgeterhaar08}) to calculate the character of each matrix. 

We use a notation for character classes wherein $a,b,c,\ldots$ represent arbitrary, distinct elements of $Q_n$. Furthermore, we omit the trivial subsets for conciseness (except in the case of the class of completely nonseparable characters, which we denote $\emptyset$). For example, in the case of $\mathcal{C}(G_3)$, the characters $\{ \emptyset, \{1\},\{1,2\}, \{1,2,3\} \}$ and $\{ \emptyset, \{2\},\{2,3\}, \{1,2,3\} \}$ are in the character class $\{a\},\{a,b\}$, whereas $\{ \emptyset, \{1\},\{2,3\}, \{1,2,3\} \}$ is in $\{a\},\{b,c\}$. For reference, each table also includes the size of the associated hyperoctahedral group, which by Theorem \ref{orbit-partition} is equal to the number of matrices in each path class.

\medskip

\begin{table}[htbp]
\centering
\caption{Characters and Path Classes of $G_2$\\$\lvert H_2 \rvert=2^2\cdot2!=8$}
\label{tableG2p}
\begin{tabular}{cccc}
\hline
Character Class & Total Matrices & Path Classes\\
\hline
\hline
$\{a\}$ & 8 & 1\\
\hline
\end{tabular}
\end{table}

\begin{table}[htbp]
\centering
\caption{Characters and Path Classes of $G_3$\\$\lvert H_3\rvert=2^3\cdot3!=48$}
\label{tableG3p}
\begin{tabular}{cccc}
\hline
Character Class & Total Matrices & Path Classes\\
\hline
\hline
$\emptyset$		& 48		& 1\\
$\{a\}$	 		& 48		& 1\\
$\{a\}\{a,b\}$		& 48	 	& 1\\
\hline
\end{tabular}
\end{table}

\begin{table}[htbp]
\centering
\caption{Characters and Path Classes of $G_4$\\$\lvert H_4 \rvert=2^4\cdot4!=384$}
\label{tableG4p}
\begin{tabular}{cccc}
\hline
Character Class & Total Matrices & Path Classes\\
\hline
\hline
$\emptyset$		& 70272		& 183\\
$\{a\}$	 		& 17280	 	& 45\\
$\{a\}\{a,b\}$		&2688		 	& 7\\
$\{a\}\{a,b,c\}$		&384		   & 1\\
$\{a\}\{a,b\}\{a,b,c\}$	&384			& 1\\
$\{a,b,c\}$  			&384			& 1\\
\hline
\end{tabular}
\end{table}

\begin{table}[htbp]
\centering
\caption{Characters and Path Classes of $G_5$\\$\lvert H_5 \rvert=2^5\cdot5!=3840$}
\label{tableG5p}
\begin{tabular}{ccc}
\hline
Character Class & Total Matrices  & Path Classes\\
\hline
\hline
$\emptyset$ 				& 182278152960	& 47468269\\
$\{a\}$ 					& 5152462080	& 1341787\\
$\{a\}\{a,b\}$ 				& 66666240		& 17361\\
$\{a,b,c,d\}$ 				& 702720		& 183\\
$\{a\}\{a,b\}\{a,b,c\}$ 			& 487680		& 127\\
$\{a\}\{a,b,c\}$ 				& 487680		& 127\\
$\{a,b,c\}$ 					& 487680		& 127\\
$\{a\}\{a,b,c,d\}$ 				& 172800		& 45\\
$\{a\}\{a,b\}\{a,b,c,d\}$ 			& 26880		& 7\\
$\{a\}\{a,b\}\{a,b,c\}\{a,b,c,d\}$ 	& 3840 		& 1\\
$\{a\}\{a,b,c\}\{a,b,c,d\}$ 		& 3840		& 1\\
$\{a,b,c\}\{a,b,c,d\}$ 			& 3840		& 1\\
\hline
\end{tabular}
\end{table}

%%%%%%%%%%%%%%%%%%%%%%%%%%%%%%%%%%%%%%%%%%%%%%%%%%%%%%
%%%%%%%%%%%%%%%%%%%%%%%%%%%%%%%%%%%%%%%%%%%%%%%%%%%%%%
%\section{Characters of Cubic Preferences}\label{sec-nested}
\subsection{Separability Properties}\label{sec-nested}

%Intro for the old ordering:

%We have seen that stack and weave already give some clues as to which characters may or may not be present for cubic preferences in general. We now present further results demonstrating more thorough limitations and descriptions of the characters of cubic preferences. Ideally, it would be possible to determine, for arbitrary $n$, the character classes present and the number of path classes (and therefore preference matrices) in each. Our research thus far has not yet attained that goal; however, we have obtained some interesting results approaching it. In particular, we shall now show that all character structures for cubic preferences are `nested'---that is, for any two sets in the character of a cubic preference matrix, one set is a subset of the other.

Each of the characters in Tables \ref{tableG2p}--\ref{tableG5p} is \emph{nested}, meaning that for each pair of separable subsets $A$ and $B$, either $A \subseteq B$ or $B \subseteq A$. We will now show that this property holds for cubic preferences in general. (See Theorem \ref{nested}.)  We begin with a few preliminary lemmas. Lemma \ref{intersect} is originally due to Bradley, Hodge, and Kilgour \cite{bradley05}, and Lemma \ref{firstrow} is an immediate consequence of the definition of separability.

\begin{lem}\label{intersect} For any preference matrix $P$, $\character(P)$ is closed under intersections. That is, if $A, B \in \character(P)$, then $A \cap B \in \character(P)$. \end{lem}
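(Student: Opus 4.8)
The plan is to argue directly from the definition of separability, after first recasting that definition in terms of pairwise comparisons. Observe that $S \in \character(P)$ is equivalent to the statement that for all outcomes $u, v \in X_S$ and all $w, w' \in X_{Q_n - S}$, the full outcome with $u$ on $S$ and $w$ on $Q_n - S$ beats the full outcome with $v$ on $S$ and $w$ on $Q_n - S$ (in $\succ$) if and only if the same holds with $w$ replaced by $w'$. In other words, the relative ranking of two outcomes that agree off $S$ is determined by their restrictions to $S$, independent of the common value fixed outside. This is just a restatement of the equality $P^{[Q_n - S, w]} = P^{[Q_n - S, w']}$, since each submatrix encodes the induced total order on $X_S$. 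Establishing this reformulation is the enabling first move.

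With it in hand, let $A, B \in \character(P)$ and set $C = A \cap B$. If $C$ is trivial (empty or all of $Q_n$), it lies in $\character(P)$ by convention, so I may assume $C$ is a nontrivial subset. I would partition $Q_n$ into four regions, $C = A \cap B$, $P_1 = A \setminus B$, $P_2 = B \setminus A$, and $P_3 = Q_n - (A \cup B)$, and write each outcome as a tuple indexed by these regions. The relevant observation is that $A = C \cup P_1$ has complement $P_2 \cup P_3$, while $B = C \cup P_2$ has complement $P_1 \cup P_3$.

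The core of the argument is a two-step ``transport'' of a single comparison between two outcomes that differ only on $C$. Fix $c, c' \in X_C$ together with assignments $(p_1, p_2, p_3)$ and $(q_1, q_2, q_3)$ to the remaining regions; the goal is to show that the $\succ$-comparison of the outcomes built from $c$ and $c'$ over $(p_1, p_2, p_3)$ agrees with the comparison built from $c$ and $c'$ over $(q_1, q_2, q_3)$. First, the two outcomes over $(p_1, p_2, p_3)$ agree on $Q_n - A = P_2 \cup P_3$, so separability of $A$ lets me replace the common off-$A$ value $(p_2, p_3)$ by $(q_2, q_3)$ without changing the comparison, reducing to the two outcomes over $(p_1, q_2, q_3)$. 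These now agree on $Q_n - B = P_1 \cup P_3$, so separability of $B$ lets me replace the common off-$B$ value $(p_1, q_3)$ by $(q_1, q_3)$, arriving at the two outcomes over $(q_1, q_2, q_3)$. Chaining the two equivalences yields the desired conclusion, and since $c, c', (p_1,p_2,p_3), (q_1,q_2,q_3)$ were arbitrary, $C$ is separable.

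I expect the main obstacle to be organizational rather than deep: one must choose the intermediate outcome so that at each step the two outcomes being compared genuinely agree on the complement of $A$ (respectively $B$), which is precisely what licenses invoking each separability hypothesis. The four-region bookkeeping is where care is needed, including the routine check that empty regions (for instance when $A \subseteq B$, so that $P_1 = \emptyset$ and $C = A$) cause no difficulty, since a ``replacement'' over an empty region is vacuous. Everything else reduces to the comparison-based restatement of separability established at the outset.
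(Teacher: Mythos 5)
Your proof is correct. Note, however, that the paper itself gives no proof of this lemma at all: it states the result and attributes it to Bradley, Hodge, and Kilgour \cite{bradley05}, so there is no ``paper proof'' to match against. Your argument is the standard one for closure of separable sets under intersection, and it is sound as written: the pairwise reformulation of Definition \ref{def-sep} (the induced comparison of two outcomes agreeing off $S$ is independent of the common value fixed on $Q_n - S$) is a faithful restatement of the equality of the submatrices $P^{[Q_n-S,x]}$, and the two-step transport is valid --- in the first step the two outcomes agree on $Q_n - A = P_2 \cup P_3$, licensing the replacement of $(p_2,p_3)$ by $(q_2,q_3)$, and after that replacement they agree on $Q_n - B = P_1 \cup P_3$, licensing the replacement of $p_1$ by $q_1$; chaining the two biconditionals shows the induced order on $X_{A \cap B}$ is the same for every outcome fixed on its complement. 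Your handling of the degenerate cases (trivial $A \cap B$ by convention, empty regions giving vacuous replacements) is also right. What your write-up buys, relative to the paper, is a self-contained proof in the paper's own notation rather than an appeal to the prior literature; a fully formal version would only need to spell out the pairwise reformulation as a displayed equivalence and verify the two substitution steps explicitly.
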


%%%%%%%%%%%%
\begin{lem}\label{firstrow} Let $P\in\mathcal{C}(G)$, and let $S$ be a proper, nonempty subset of $Q_n$ such that $S \in \character(P)$. Furthermore, let $x$ be the outcome on $S$ corresponding to the first row of $P$. Then the first time any outcome on $Q_n-S$ occurs in $P$, the outcome on $S$ in the corresponding row must be $x$. \end{lem}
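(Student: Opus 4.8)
The plan is to exploit the single fact that makes the lemma nearly immediate: separability of $S$ guarantees that the induced ordering on $X_S$ is the same no matter what outcome is fixed on $Q_n - S$, so it suffices to identify the top of that common ordering with the given outcome $x$. First I would unpack Definition \ref{def-sep}: the hypothesis $S \in \character(P)$ means $P^{[Q_n-S,y]} = P^{[Q_n-S,y']}$ for all $y, y' \in X_{Q_n-S}$. Each submatrix $P^{[Q_n-S,y]}$ is itself a preference matrix on $S$ whose rows, read top to bottom, are exactly the $S$-outcomes of those rows of $P$ that have outcome $y$ on $Q_n-S$, listed in the order in which they appear in $P$. Hence the common value of these submatrices encodes one well-defined induced ordering on $X_S$; I will write $x^{*}$ for its most-preferred (top) outcome.

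Next I would pin down $x^{*}$ and then make the first-occurrence observation. The first row of $P$ is the globally most-preferred outcome; by hypothesis its outcome on $S$ is $x$, and I let $y_0$ denote its outcome on $Q_n-S$. Among all rows of $P$ whose restriction to $Q_n-S$ equals $y_0$, this row appears first (being first in all of $P$), so it is the top row of $P^{[Q_n-S,y_0]}$, giving $x^{*}=x$. Now fix an arbitrary $y \in X_{Q_n-S}$. The first row of $P$ in which $Q_n-S$ takes the outcome $y$ is precisely the top row of the submatrix $P^{[Q_n-S,y]}$; by separability this submatrix equals $P^{[Q_n-S,y_0]}$, so its top row carries the $S$-outcome $x^{*}=x$. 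Since $y$ was arbitrary, this is exactly the assertion that the first occurrence of any outcome on $Q_n-S$ sits in a row whose $S$-outcome is $x$.

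I expect no substantial obstacle here, consistent with the remark that the lemma follows immediately from the definition of separability; the result is essentially bookkeeping once the induced ordering is set up correctly. The one point that warrants care is the identification of ``the first occurrence of $y$'' with the top row of $P^{[Q_n-S,y]}$ — that is, confirming that selecting the rows with $Q_n-S=y$ and preserving their order within $P$ yields precisely the submatrix whose top row realizes the most-preferred $S$-outcome. It is worth noting that the cubic structure $P \in \mathcal{C}(G_n)$ plays no role in the argument: only that $P$ is a preference matrix and that $S$ is separable are used, so the lemma in fact holds for arbitrary preference matrices.
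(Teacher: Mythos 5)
Your proof is correct, and it is exactly the argument the paper has in mind: the paper offers no written proof, stating only that the lemma ``is an immediate consequence of the definition of separability,'' and your writeup simply spells out that immediate consequence (the first occurrence of each $(Q_n-S)$-outcome $y$ is the top row of $P^{[Q_n-S,y]}$, and all these submatrices coincide with $P^{[Q_n-S,y_0]}$, whose top row is $x$). Your closing observation that the cubic structure of $P$ is never used, so the lemma holds for arbitrary preference matrices, is also accurate.
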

%%%%%%%%%%%%

%%%%%%%%%%%%
%\begin{proof}
%This is an immediate consequence of the separability of $S$, since $x$ must be the most preferred outcome on $S$ for any outcome fixed on $Q_n-S$.
%\end{proof}
%%%%%%%%%%%%

%%%%%%%%%%%%
\begin{lem}\label{firstbitnonsep} Let $P\in\mathcal{C}(G)$ and let $S$ be a proper, nonempty subset of $Q_n$ such that $S \in \character(P)$. Then the bit that changes from the first to the second row of $P$ is not in $S$.\end{lem}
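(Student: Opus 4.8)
The plan is to argue by contradiction. Suppose the bit that changes from the first to the second row of $P$ lies in $S$; equivalently, $S$ changes from row~1 to row~2 while $Q_n - S$ does not. First I would fix notation: let $w$ denote the (common) outcome on $Q_n - S$ shared by rows~1 and~2, which are identical on $Q_n-S$ precisely because the unique differing bit belongs to $S$. Let $x$ denote the outcome on $S$ in the first row; since $S\in\character(P)$, this $x$ is the top of the induced order on $S$. The whole difficulty is to locate a second, incompatible occurrence of the full outcome $(x \text{ on } S,\; w \text{ on } Q_n-S)$, which would contradict the fact that every outcome appears exactly once as a row of $P$.

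To produce that second occurrence, I would examine the first row at which the outcome on $Q_n - S$ departs from $w$. Because $S$ is a proper subset, $Q_n - S$ is nonempty and carries at least two outcomes, so such a row exists; call it row~$k$. Since rows~1 and~2 both exhibit $w$ on $Q_n - S$, we have $k \geq 3$. The transition from row $k-1$ to row $k$ flips a single bit, and that bit must lie in $Q_n - S$ (otherwise the outcome on $Q_n-S$ would not change there); hence rows $k-1$ and $k$ agree on $S$. Moreover, row~$k$ is the \emph{first} appearance in $P$ of its outcome on $Q_n - S$, because every earlier row shows $w$. Applying Lemma~\ref{firstrow} to this outcome on $Q_n - S$ forces the outcome on $S$ in row~$k$ to equal $x$, and therefore the outcome on $S$ in row~$k-1$ is $x$ as well.

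Now I would close the contradiction: row~$k-1$ has outcome $x$ on $S$ and, since $k-1 \geq 2$ lies among the rows preceding row~$k$, outcome $w$ on $Q_n - S$. Thus rows~$1$ and~$k-1$ carry the identical full outcome $(x,\,w)$. Because each of the $2^n$ outcomes occurs exactly once as a row of $P$, this is impossible unless $k-1 = 1$, i.e.\ $k = 2$, which contradicts $k \geq 3$. The assumption that the changing bit lies in $S$ is therefore untenable, completing the proof.

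The main obstacle I anticipate is the careful bookkeeping in the middle paragraph: one must verify simultaneously that row~$k$ exists (using that $S$ is proper), that the flip into row~$k$ occurs outside $S$ so that rows $k-1$ and $k$ coincide on $S$, and that row~$k$ really is the first occurrence of its $(Q_n-S)$-outcome so that Lemma~\ref{firstrow} applies. Each of these rests on the single-bit-change property of cubic preferences and on $S$ being a proper, nonempty separable set, so I would state those dependencies explicitly rather than leaving them implicit.
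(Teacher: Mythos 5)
Your proof is correct and takes essentially the same route as the paper's: locate the first pair of consecutive rows at which $Q_n - S$ changes, use separability to force the outcome on $S$ there to equal the top row's outcome $x$ (the paper argues this inline by comparing first rows of the induced submatrices, which is exactly the content of Lemma~\ref{firstrow} that you invoke), and conclude that the outcome $(x, w)$ appears in two distinct rows. Your explicit bookkeeping with $k \geq 3$ just makes precise the repetition the paper records when it says $x_1 y_1$ would appear twice in $P$.
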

%%%%%%%%%%%%

%%%%%%%%%%%%
\begin{proof}
Assume, to the contrary, that $S$ is separable and $S$ contains the first bit to change in $P$. Then $$P=\begin{pmatrix} x_1 & y_1 \\ x_2 & y_1 \\ \vdots & \vdots \\ x_j & y_1 \\ x_j & y_2 \end{pmatrix},$$ where $x_1$ and $x_2$ are outcomes on $S$, $y_1$ and $y_2$ are outcomes on $Q_n - S$, and $x_j$ is the outcome on $S$ that occurs in the first pair of rows in which $Q_n-S$ changes.\footnote{For ease of notation, we permute the columns of $P$ (both here and in subsequent results) so that the columns corresponding to $S$ and $Q_n - S$ are grouped together. This notational simplification has no impact on the substance of our arguments.} Note that the first row of $P^{[Q_n-S,y_2]}$ is $x_j$ and the first row of $P^{[Q_n-S,y_1]}$ is $x_1$. Because $S$ is separable, this means that $x_j=x_1$, which is a contradiction since $x_1 y_1$ would then appear twice in $P$. Therefore, if $S$ is separable, the first bit to change is not in $S$.
\end{proof}
%%%%%%%%%%%%

%%%
% Start here
%%%

We are now ready to state and prove our main result about the characters of cubic preferences.

%%%%%%%%%%%%
\begin{thm}\label{nested} Let $P\in\mathcal{C}(G_n)$. If $A, B \in \character(P)$, then $A\subseteq B$ or $B\subseteq A$.\end{thm}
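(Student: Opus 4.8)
The plan is to show that separability forces all the changes on $Q_n - S$ to occur before any bit of $S$ ever changes, so that each separable set is cut out by a single threshold in a fixed linear order on the bits---and thresholds give a chain. Concretely, for $P \in \mathcal{C}(G_n)$ I would record, for each bit $k \in Q_n$, the index of the transition (adjacent pair of rows) at which $k$ first changes. Since exactly one bit changes at each transition, these first-change indices are distinct and linearly order the bits as $b_1, b_2, \ldots, b_n$. I claim that every proper, nonempty separable set $S$ is a \emph{suffix} of this order, i.e.\ $S = \{b_{i+1}, \ldots, b_n\}$ for some $i$. Granting this, Theorem \ref{nested} is immediate: two suffixes of one linear order are always comparable, so the shorter is contained in the longer, giving $A \subseteq B$ or $B \subseteq A$ (the trivial sets $\emptyset$ and $Q_n$ are handled separately and are nested with everything).

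The heart of the argument is a strengthening of Lemma \ref{firstbitnonsep}: if $S$ is separable and $x$ is the outcome on $S$ in the first row, then the first $2^{|Q_n - S|}$ rows of $P$ all carry outcome $x$ on $S$ and realize every one of the $2^{|Q_n - S|}$ outcomes on $Q_n - S$. In particular no bit of $S$ changes until every bit of $Q_n - S$ has already changed, which is precisely the suffix property. To prove this, let $t$ be the first transition at which a bit of $S$ changes; up to that point $S$ is constant at $x$, so rows $r_1, \ldots, r_t$ carry $t$ \emph{distinct} outcomes on $Q_n - S$ (distinct because the full rows of $P$ are distinct). First I would assume for contradiction that $t < 2^{|Q_n - S|}$, so some outcome on $Q_n - S$ is still unrealized among these rows.

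The key step is to examine the first row $r_u$ (necessarily with $u > t$) whose outcome on $Q_n - S$ is \emph{new}, i.e.\ not among those in rows $r_1, \ldots, r_t$. Because $r_u$ is the first occurrence of this new outcome, the transition from $r_{u-1}$ to $r_u$ must change a bit \emph{outside} $S$ (a change inside $S$ would leave the outcome on $Q_n - S$ unchanged, hence not new), so $r_{u-1}$ and $r_u$ share an outcome on $S$; and by Lemma \ref{firstrow} that shared outcome is $x$. But then $r_{u-1}$ carries outcome $x$ on $S$ together with an outcome on $Q_n - S$ that already appeared in one of the first $t$ rows, and since each full outcome occurs exactly once, $r_{u-1}$ must literally coincide with that earlier row, forcing $u - 1 \le t$. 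This contradicts $u > t$ together with the fact that $r_{t+1}$ still carries an old outcome on $Q_n - S$ (the transition at $t$ changed a bit of $S$, not of $Q_n-S$), which forces $u \ge t + 2$. Hence $t = 2^{|Q_n - S|}$, establishing the suffix property and with it the theorem.

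The main obstacle is proving this strengthened lemma rather than deducing nestedness from it: one must correctly argue that the transition \emph{into} the first new outcome on $Q_n - S$ cannot be a change inside $S$, and then extract the contradiction from the uniqueness of rows. Everything else---distinctness of the first-change indices and the elementary fact that two suffixes of a linear order are nested---is routine. Note that Lemma \ref{intersect} is not needed on this route, though it suggests an alternative: one could instead argue that incomparable separable $A, B$, via their intersection and first-change behavior, yield two unequal induced submatrices and hence a nonseparability.
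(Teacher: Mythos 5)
Your proof is correct, and it takes a genuinely different route from the paper's. The paper argues by contradiction on a pair of incomparable separable sets $A$ and $B$: it uses Lemma \ref{firstbitnonsep} to force the first changing bit into $Q_n - (A \cup B)$, splits into cases according to whether $Z = A \cap B$ is empty, invokes closure under intersections (Lemma \ref{intersect}) so that $Z$ is itself separable, and then tracks rows through a fairly intricate descent until two bits are forced to change in a single transition. You instead prove a stronger structural fact about each separable set individually: if $S \in \character(P)$ is proper and nonempty, then the top $2^{|Q_n - S|}$ rows of $P$ hold $S$ fixed at its first-row outcome and exhaust all outcomes on $Q_n - S$, so every bit of $Q_n - S$ has its first change strictly before any bit of $S$ does. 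Hence each nontrivial separable set is a suffix of the linear order of bits by first-change index, and suffixes of a single linear order form a chain, giving nestedness immediately. Your key step---that the transition into the first ``new'' outcome on $Q_n - S$ must change a bit outside $S$, combined with Lemma \ref{firstrow} and the uniqueness of rows to force $u - 1 \le t$ against $u \ge t+2$---is sound, and I checked the edge case $u = t+1$ is indeed excluded by the fact that row $t+1$ repeats row $t$'s outcome on $Q_n - S$. What your route buys: it is shorter, it needs neither Lemma \ref{intersect} nor any case analysis on intersections, it upgrades Lemma \ref{firstbitnonsep} into a quantitative threshold statement (a ``first-block,'' arbitrary-$S$ analogue of the column characterization in Theorem \ref{stack-col}), and the suffix picture makes Corollaries \ref{setcompsep} and \ref{nocompletesep} transparent. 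What the paper's route buys: it works directly from the definition of separability on the offending pair of sets, without needing to formulate or prove the stronger exhaustion property, at the cost of a longer case-by-case argument.
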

%%%%%%%%%%%%

%%%%%%%%%%%%
\begin{proof}
We will use a proof by contradiction, divided into cases. Let $A,B\in \character(P)$ such that $A\not\subseteq B$ and $B\not\subseteq A$. Let $Z=A\cap B$, $C=Q_n-(A\cup B)$, $A'=A- Z$, and $B'=B-Z$. By Lemma \ref{firstbitnonsep}, the first bit to change can be in neither $A$ nor $B$. Hence,  the first bit to change must be in $C$. Therefore, $C\neq\emptyset$. This is a contradiction (and therefore establishes the result) when $n = 2$. Throughout the remainder of the proof, we will assume $n\geq 3$ and use the notation $a_i$, $b_i$, $c_i$, and $z_i$ to denote outcomes on $A'$, $B'$, $C$, and $Z$, respectively. 

%Note that since $A$ and $B$ are both separable with respect to $P$, $P$ induces unique orderings on the $a_i$ and $b_i$, which we will denote by \[ a_1 \succ a_2 \succ \cdots \succ a_{2^|A|} \text{~~~and~~~} 
%Throughout, we shall denote the outcomes on $A',B',C$, and $Z$ by $a_1\succ a_2 \succ \cdots \succ a_{2^{|A'|}}$, $b_1\succ b_2 \succ \cdots \succ b_{2^{|B'|}}$, $c_1, c_2, \cdots , c_{2^{|C|}}$, and $z_1, z_2 , \cdots , z_{2^{|Z|}}$, respectively.

 Consider the following two cases:

\textbf{Case 1.} $Z=\emptyset$. Note then that $A'=A$ and $B'=B$. We already know that the first bit to change must be in $C$. Without loss of generality, assume that $A$ changes before $B$. Then $P$ has the following form, where $a_2b_1c_k$ is the highest row in which $a_2$ appears, for some $1<k<2^{|C|}$:
\[ P =
\begin{pmatrix}
a_1 & b_1 & c_1 \\
a_1 & b_1 & c_2 \\
\vdots & \vdots & \vdots \\
a_1 & b_1 & c_k\\
a_2 & b_1 & c_k\\
\vdots & \vdots & \vdots\\
a_j & b_2 & c_h\\
\vdots & \vdots & \vdots\\
\end{pmatrix}.
\]

Let $a_jb_2c_h$ be the highest row in which $b_2$ appears, which must appear below the row $a_2b_1c_k$ by the assumption that $A$ changes before $B$. Then by Lemma \ref{firstrow} and the fact that $A$ is separable, $j = 1$.
%Then  fixing $b_2 \, c_h$ on $B\cup C$ will induce a most preferred outcome on $A$ of $a_j$, and therefore $j=1$ by the separability of $A$. 
In other words, before $B$ changes to $b_2$, $A$ must change back to $a_1$.  Let $a_1b_1c_p$ be the first row after $a_2b_1c_k$ in which $A$ changes back to $a_1$, where $c_p$ is some outcome on $C$ so that the row $a_1b_1c_p$ has not yet appeared in $P$. Since only one bit changes at a time and, by assumption, this change occurs in $A$, the row immediately above $a_1b_1c_p$ is $a_zb_1c_p$, for some $z \neq 1$. Hence P has the following form:
%Let $a_1b_1c_p$ be the highest row in which this occurs, where $c_p$ is some outcome on $C$ so that the row $a_1 \, b_1 \, c_p$ has not yet appeared in $P$. Furthermore, by the assumption that $a_1b_1c_p$ is the first row where $A$ changes back to $a_1$, the row immediately above it must have an outcome other than $a_1$ on $A$. Thus the row immediately above $a_1b_1c_p$ must be $a_zb_1c_p$, for some $z\neq 1$, and so $P$ has the following form:
 \[ P =
\begin{pmatrix}
a_1 & b_1 & c_1 \\
a_1 & b_1 & c_2 \\
\vdots & \vdots & \vdots \\
a_1 & b_1 & c_k \\
a_2 & b_1 & c_k\\
\vdots & \vdots & \vdots\\
a_z & b_1 & c_p\\
a_1 & b_1 & c_p\\
\vdots & \vdots & \vdots\\
\end{pmatrix}.
\]

Fixing the outcome $b_1 \, c_p$ on $B\cup C$ induces a preference of $a_z \succ a_1$ on $A$. But by the separability of $A$, $a_1$ must be the most preferred outcome on $A$ for any outcome fixed on $B\cup C$. Thus we have a contradiction, which rules out this case.

\textbf{Case 2.} We now consider the case that $Z\neq \emptyset$. As before, the first bit to change must be in $C$, but now that $Z\neq \emptyset$, we have:

 \[ P =
\begin{pmatrix}
a_1&z_1&b_1&c_1\\
a_1&z_1&b_1&c_2\\
\vdots&\vdots&\vdots&\vdots
\end{pmatrix}
\]

Note that, by Lemma \ref{intersect}, $Z=A\cap B$ is separable. Therefore, by Lemma \ref{firstrow}, the first time an outcome on $Q_n-A$, $Q_n-B$, or $Q_n-Z$ occurs, the corresponding outcome on $A$, $B$, or $Z$ must be $a_1z_1$, $z_1b_1$, or $z_1$, respectively. Without loss of generality, assume that $A'$ changes for the first time before $B'$. Let $a_2$ and $b_2$ be the second outcomes that occur on $A'$ and $B'$, respectively. Note that it is possible for $Z$ to change before either $A'$ or $B'$ changes; however, $Z$ will have to change back to $z_1$ prior to the first time either $A'$ or $B'$ changes (by Lemma \ref{firstrow}, as noted above, and the fact that only one bit can change at a time). Therefore, $P$ has the following form, where $c_3$ and $c_4$ are distinct outcomes on $C$ not equal to $c_1$.

 \[ P =
\begin{pmatrix}
a_1&z_1&b_1&c_1\\
a_1&z_1&b_1&c_2\\
\vdots&\vdots&\vdots&\vdots\\
a_1&z_1&b_1&c_3\\
a_2&z_1&b_1&c_3\\
\vdots&\vdots&\vdots&\vdots\\
a_1&z_1&b_1&c_4\\
a_1&z_1&b_2&c_4\\
\vdots&\vdots&\vdots&\vdots
\end{pmatrix}
\]

Note that it is possible that $c_3=c_2$, in which case the second and third rows explicitly given here would become a single row. However, $c_4=c_2$ is impossible.

Now $A'$ must change back to $a_1$ between row $a_2z_1b_1c_3$ and row $a_1z_1b_1c_4$. This change cannot occur immediately after row $a_2 z_1 b_1 c_3$, since row $a_1  z_1 b_1 c_3$ has already appeared and only one bit can change at a time. Thus, between $a_2 z_1 b_1 c_3$ and $a_1 z_1 b_1 c_4$, there must exist a pair of consecutive rows of the following form, where $a_3 \neq a_1$ (it may or may not be the case that $a_3=a_2$), and $z_2$ and $c_5$ are outcomes on $Z$ and $C$, respectively, which---for the time being---may or may not be distinct from $z_1$ and $c_4$:

%it must be the case that between rows $a_2 z_1 b_1 c_3$ and $a_1 z_1 b_1 c_4$, there is another row of the form $a_3 z_2 b_1 c_5$, where $a_3 \neq a_1$. (It may or may not be the case that $a_3 = a_2$, and nothing can be concluded immediately about the values of $z_2$ and $c_5$.) If $a_3 z_2 b_1 c_5$ appears immediately above $a_1 z_1 b_1 c_4$, then $z_2 = z_1$ and $c_5 = c_4$. In this case, fixing $b_1 c_4$ on $Q - A$ induces a preference of $a_3 z_1 \succ a_1 z_1$---a contradiction to the separability of $A$.

%Now we consider the last time that $A'$ changes back to $a_1$ before the row $a_1z_1b_1c_4$. We denote this pair of rows as follows:

\begin{center}
\begin{tabular}{>{$}c<{$}>{$}c<{$}>{$}c<{$}>{$}c<{$}}
a_3&z_2&b_1&c_5\\
a_1&z_2&b_1&c_5\\
\end{tabular}
\end{center}

Choose the lowest such pair of rows---that is, the last pair of rows before $a_1z_1b_1c_4$ in which $A'$ changes back to $a_1$. Then $P$ has the following form:%\footnote{Technically, we cannot automatically rule out the possibility that $a_1 z_2 b_1 c_5 = a_1 z_1 b_1 c_4$. However, this case is proved impossible by the subsequent argument.}

 \[ P =
\begin{pmatrix}
a_1&z_1&b_1&c_1\\
a_1&z_1&b_1&c_2\\
\vdots&\vdots&\vdots&\vdots\\
a_1&z_1&b_1&c_3\\
a_2&z_1&b_1&c_3\\
\vdots&\vdots&\vdots&\vdots\\
a_3&z_2&b_1&c_5\\
a_1&z_2&b_1&c_5\\
\vdots&\vdots&\vdots&\vdots\\
a_1&z_1&b_1&c_4\\
a_1&z_1&b_2&c_4\\
\vdots&\vdots&\vdots&\vdots
\end{pmatrix}
\]

Now consider $z_2$ and $c_5$. If $z_2=z_1$, then fixing an outcome of $b_1c_5$ on $Q_n-A$ induces a preference of $a_3z_1\succ a_1z_1$ on $A$---a contradiction to the separability of $A$ since $a_1 z_1$ is the outcome on $A$ in the top row of $P$. Thus $z_2\neq z_1$. Similarly, $c_5\neq c_4$, as that would induce the order $z_2\succ z_1$ when fixing $a_1b_1c_4$, contradicting the separability of $Z$. Furthermore, $c_5$ must be an outcome which has appeared at least once in a row above $a_3z_2b_1c_5$, since otherwise fixing $b_1c_5$ would yield a most preferred outcome of $a_3z_2$ on $A$, again contradicting the separability of $A$.

Now, by construction, the row immediately above $a_1z_1b_1c_4$ must be $a_1z_ib_1c_j$, for some $i$ and $j$. We first show that $z_i=z_1$.
For contradiction,  suppose $z_i\neq z_1$. Then it must be that $c_j=c_4$, so this row is $a_1z_ib_1c_4$. But then fixing $a_1b_1c_4$ on $Q_n-Z$ induces the order $z_i\succ z_1$ on $Z$, where $z_i\neq z_1$. Since $z_1$ is the outcome on $Z$ in the top row, this violates the separability of $Z$. Hence $z_i=z_1$.

Because rows cannot repeat, we see that $c_j\neq c_1,c_2,c_3,c_4$. Furthermore, if $c_j=c_5$, then fixing $a_1b_1c_5$ on $Q_n-Z$ would induce $z_2\succ z_1$ on $Z$, again contradicting the separability of $Z$. Therefore we set $c_j=c_6\neq c_1,c_2,c_3,c_4,c_5$. This gives us the following outcomes on the lower rows of the matrix $P$:

\[
\begin{pmatrix}
\vdots&\vdots&\vdots&\vdots\\
a_3&z_2&b_1&c_5\\
a_1&z_2&b_1&c_5\\
\vdots&\vdots&\vdots&\vdots\\
a_1&z_1&b_1&c_6\\
a_1&z_1&b_1&c_4\\
a_1&z_1&b_2&c_4\\
\vdots&\vdots&\vdots&\vdots
\end{pmatrix}
\]

Now consider the row immediately above $a_1z_1b_1c_6$. Applying the argument of the preceding two paragraphs with $c_6$ in place of $c_4$ shows that this row must be equal to $a_1z_1b_1c_7$ for some new outcome $c_7$ on $C$. And in fact, this pattern continues for each successive row above $a_1z_1b_1c_4$. Therefore, we must eventually obtain the following pair of rows in $P$, where $c_j \neq c_5$:

\[
\begin{pmatrix}
\vdots&\vdots&\vdots&\vdots\\
a_1&z_2&b_1&c_5\\
a_1&z_1&b_1&c_j\\
\vdots&\vdots&\vdots&\vdots
\end{pmatrix}
\]
But since $z_1\neq z_2$ (as shown previously), we have two bits changing at the same time, a contradiction to the fact that $P \in \mathcal{C}(G_n)$.
%Therefore, there is an infinite sequence of rows between $a_1z_2b_1c_5$ and $a_1z_1b_1c_4$, a clear contradiction to the finite number (namely, $2^{|Q|}$) of rows in $P$. Thus our original supposition is false, and it must be that either $A\subseteq B$ or $B\subseteq A$.

We have now shown that in both cases ($Z = \emptyset$ and $Z \neq \emptyset$), the assumption that $A\not\subseteq B$ and $B\not\subseteq A$ leads to a contradiction; therefore, it must be the case that $A\subseteq B$ or $B\subseteq A$.
\end{proof}
%%%%%%%%%%%%

The following corollaries follow immediately from Theorem \ref{nested}.

%%%%%%%%%%%%
\begin{coro}\label{setcompsep}
%Let $P \in \mathcal{C}(G_n)$, and let $S \subseteq Q_n$. Then $S$ and $Q_n-S$ cannot both be separable.
Let $P \in \mathcal{C}(G_n)$, and let $S$ be a nonempty, proper subset of $Q_n$. Then $S$ and $Q_n-S$ cannot both be separable.
\end{coro}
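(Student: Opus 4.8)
The plan is to prove the contrapositive-style statement directly by contradiction, leveraging the nestedness result from Theorem~\ref{nested} together with the first-bit lemma (Lemma~\ref{firstbitnonsep}). Suppose, for the sake of contradiction, that both $S$ and $Q_n - S$ are separable with respect to some $P \in \mathcal{C}(G_n)$, where $S$ is a nonempty, proper subset of $Q_n$. Since $S$ is nonempty and proper, its complement $Q_n - S$ is also nonempty and proper, so both are genuine nontrivial subsets and Lemma~\ref{firstbitnonsep} applies to each.

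The core of the argument is that $S$ and $Q_n - S$ are disjoint and neither is contained in the other (since both are nonempty proper subsets, $S \subseteq Q_n - S$ would force $S = \emptyset$, and similarly for the reverse inclusion). This immediately contradicts Theorem~\ref{nested}, which asserts that any two separable subsets must be comparable by inclusion. So the one-line version of the proof is simply that $S$ and $Q_n - S$ form an incomparable pair in $\character(P)$, which Theorem~\ref{nested} forbids.

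Alternatively, and perhaps more illuminatingly, I would give a self-contained argument via Lemma~\ref{firstbitnonsep}. The first bit to change from the first to the second row of $P$ lies in some column, which belongs to exactly one of $S$ or $Q_n - S$. If $S$ is separable, Lemma~\ref{firstbitnonsep} says that first changing bit is not in $S$, hence it must be in $Q_n - S$. But if $Q_n - S$ is also separable, the same lemma says the first changing bit is not in $Q_n - S$ either. Since the single bit that changes between the first two rows must lie in $S \cup (Q_n - S) = Q_n$, we have a contradiction: the changing bit can be in neither part, yet it must be in one of them.

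I expect essentially no obstacle here; the statement is labeled a corollary precisely because it is an immediate consequence of the preceding work. The only point requiring a moment's care is confirming that both $S$ and $Q_n - S$ qualify as \emph{nonempty, proper} subsets so that the relevant lemma or theorem applies to each, but this is guaranteed by the hypothesis that $S$ is nonempty and proper. I would present the short argument based on Theorem~\ref{nested} as the primary proof, since it is the most direct, and perhaps remark on the independent Lemma~\ref{firstbitnonsep} argument as intuition for why complementary sets behave this way.
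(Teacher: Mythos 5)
Your primary argument is exactly the paper's: the corollary is presented there as an immediate consequence of Theorem~\ref{nested}, since $S$ and $Q_n - S$ are nonempty, proper, and incomparable under inclusion, so they cannot both lie in $\character(P)$. Your alternative self-contained argument via Lemma~\ref{firstbitnonsep} is also correct---it reproduces the opening step of the paper's proof of Theorem~\ref{nested} (the first changing bit can lie in neither separable set, yet must lie in $S \cup (Q_n - S) = Q_n$)---and gives a more elementary proof that avoids invoking the full nestedness theorem.
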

%%%%%%%%%%%%

%%%%%%%%%%%%%
%\begin{proof}
%Notice that if both $S$ and $Q_n-S$ are both separable, we know by Lemma \ref{firstbitnonsep} that the first bit to change must be in $(S\cup (Q_n-S)) ^c$. However, $(S\cup(Q_n-S))^c  = Q_n^c = \emptyset$. Hence, no bit can be the first bit to change, a contradiction. 
%\end{proof}
%%%%%%%%%%%%%

%The following is a direct result of Corollary \ref{setcompsep}.

%%%%%%%%%%%%
\begin{coro} \label{nocompletesep} For all $n \geq 2$ and all $P \in \mathcal{C}(G_n)$, $P$ is not completely separable. \end{coro}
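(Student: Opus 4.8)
The plan is to derive this directly from the nestedness established in Theorem \ref{nested}, since complete separability would force the character to contain two incomparable separable sets. First I would unpack the definition: by Definition \ref{def-char}, $P$ is completely separable precisely when $\character(P) = \mathcal{P}(Q_n)$, so \emph{every} subset of $Q_n$ is separable with respect to $P$. The strategy is then to exhibit two separable subsets that are incomparable under inclusion, which is exactly the configuration forbidden by Theorem \ref{nested}.

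For the concrete witnesses, I would use the hypothesis $n \geq 2$ to guarantee that $Q_n$ contains at least two distinct elements, and take the singletons $A = \{1\}$ and $B = \{2\}$. These are nonempty subsets of $Q_n$, and neither is contained in the other. Under the assumption of complete separability, both $A$ and $B$ would belong to $\character(P)$. But Theorem \ref{nested} asserts that any two members of $\character(P)$ must satisfy $A \subseteq B$ or $B \subseteq A$, which fails for these two singletons. This contradiction shows that $\character(P) \neq \mathcal{P}(Q_n)$, so $P$ cannot be completely separable.

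Alternatively, I could route the argument through Corollary \ref{setcompsep}: since $n \geq 2$, there exists a nonempty proper subset $S$ of $Q_n$ (again, any singleton suffices), and complete separability would make both $S$ and $Q_n - S$ separable, contradicting that corollary directly. Either formulation yields the result, and I would likely present the shorter one.

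I do not anticipate any genuine obstacle. All the mathematical content is carried by Theorem \ref{nested} (equivalently Corollary \ref{setcompsep}); the only thing that requires the hypothesis $n \geq 2$ is the existence of a pair of incomparable nonempty subsets, and this is immediate. The main care to take is simply to confirm that the chosen witnesses are proper and nonempty so that they genuinely fall under the scope of the cited results.
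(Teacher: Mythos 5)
Your proof is correct and matches the paper's intent: the paper derives Corollary \ref{nocompletesep} as an immediate consequence of Theorem \ref{nested} (it gives no further details), and your argument---exhibiting two incomparable separable sets such as $\{1\}$ and $\{2\}$ to contradict nestedness---is precisely the natural fleshing-out of that one-line deduction. Your alternative route via Corollary \ref{setcompsep} is equally valid and equally close to the paper's reasoning.
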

%%%%%%%%%%%%

%Concluding paragraph for other ordering:

%We now know that cubic preferences must have `nested' characters. It is interesting to note that previous work (\cite{hodgekrineslahr09}) to create a class of preferences via ``preseparable extensions'' resulted in characters which always contain a set and its complement. Thus, the set of cubic preference matrices is completely disjoint from the set of preference matrices created by preseparable extensions.

Theorem \ref{nested} places narrow limits on the kinds of characters that can be associated with cubic preferences. In particular, the nested structure of cubic characters sets them apart from the characters generated by Hodge, Krines, and Lahr's \cite{hodgekrineslahr09} method of preseparable extensions, which always contain both a nontrivial subset of $Q_n$ \emph{and} its complement. Therefore, any method to construct cubic preference matrices will necessarily yield different separability structures than those obtained by previous work. In the next section, we introduce two such methods.

It is important to note that the necessary condition provided by Theorem \ref{nested} is not sufficient. For example, in the $n = 5$ case, none of the (nested) character classes $\{a, b\}$, $\{a, b\}\{a,b,c\}$, $\{a,b\}\{a,b,c,d\}$, or $\{a,b\}\{a,b,c\}\{a,b,c,d\}$ appear in Table \ref{tableG5p}. Thus, it remains an open question to completely classify cubic preferences for arbitrary $n$ according to their path and/or character classes.

%%%%%%%%%%%%%%%%%%%%%%%%%%%%%%%%%%%%%%%%%%%%%%%%%%%%%%
%%%%%%%%%%%%%%%%%%%%%%%%%%%%%%%%%%%%%%%%%%%%%%%%%%%%%%
\section{Constructing Cubic Preferences}\label{sec-construct}

%Another relic from the old ordering:

%Now that path classes have shown us the inner structure of the consistency set, we seek which characters are and are not present in cubic preferences. We begin in this section by exploring methods of constructing cubic preferences, followed in the next section by more comprehensive (but still incomplete) results limiting possible characters.
 
Given the rapid growth of the size of $\mathcal{C}(G_n)$ as $n$ increases, it is infeasible in general to construct all cubic preference matrices by brute force computation.
%\footnote{Indeed, for $n=5$, we constructed only one matrix per path class and then multiplied by the size of $\mathbb{Z}_2 \wr S_5$ (3840) to obtain the number of preference matrices, yet the process still took several days of computer time.}
%Did I remember that correctly?
Here, we address this issue by introducing two functions---called \emph{stack} and \emph{weave}---which generate cubic preferences recursively by mapping elements of $\mathcal{C}(G_n)$ to $\mathcal{C}(G_{n+1})$.  These functions behave predictably in that the characters of the constructed matrices are completely determined by the characters of the input matrices. While the stack and weave functions do not generate all possible cubic preferences---and thus, they do not completely characterize the consistency sets of Gray graphs---they do allow us to construct matrices with predictable characters, and thus to study cubic preferences in a more concrete way.
%Thus, these functions allow us to study patterns in consistency sets for $n>5$ without direct computation.
%Unfortunately, these functions do not construct the entire consistency set for any $n$, so they do not solve the problem of constructing cubic preferences. However,
%we will see that stack and weave reveal patterns that hold for all cubic preferences, not only those which they construct, so they do increase our understanding of this class of preferences.
%stack and weave do allow us to construct matrices with predictable characters, and thus to study cubic preferences in a more concrete way.

%%%%%%%%%%%%%%%%%%%%%%%%%%%%%%%%%%%%%%%%
%%%%%%%%%%%%%%%%%%%%%%%%%%%%%%%%%%%%%%%%
\subsection{The Stack Function}

The first function, which we call the \emph{stack} function, combines pairs of matrices in $\mathcal{C}(G_{n})$ to form a single matrix in $\mathcal{C}(G_{n+1})$. To illustrate the intuitive idea behind the stack function, we will consider a simple example.

First, we choose two matrices, $P_1$, $P_2 \in \mathcal{C}(G_n)$, such that the first row of $P_2$ is equal to the last row of $P_1$. Next, we stack $P_1$ on top of $P_2$ to form a new $2^{n+1} \times n$ matrix. Finally, we insert a column vector of the form $$\begin{pmatrix} 1 &1 & \cdots &0 &0\end{pmatrix}^\top \hspace{3mm}\text{ or }\hspace{3mm}\begin{pmatrix} 0&0& \cdots &1 &1\end{pmatrix}^\top,$$ either as the first or last column of the matrix or in between two other columns. 

Applying this process with $$P_1=\begin{pmatrix} 0 & 0 \\ 0 & 1 \\ 1 & 1 \\ 1 & 0 \end{pmatrix}, ~P_2=\begin{pmatrix} 1 & 0 \\ 0 & 0 \\ 0 & 1 \\1 &1 \end{pmatrix},$$ and the column vector $\mathbf{c} = \begin{pmatrix} 1 & 1 & 1 & 1 & 0 & 0 & 0 & 0\end{pmatrix}^\top$ inserted as the second column, we obtain the matrix
$$\begin{pmatrix} 0 & 1 & 0 \\ 0 & 1 & 1 \\ 1 &1 & 1 \\ 1 &1 & 0 \\ 1 & 0 & 0 \\ 0 & 0 & 0 \\ 0 &0  & 1 \\ 1 & 0 &1\end{pmatrix},$$ which is an element of $\mathcal{C}(G_3)$.

%First, select any preference matrix from $\mathcal{C}(G_n)$. For our example, we will choose $$P_1=\begin{pmatrix} 0 & 0 \\ 0 & 1 \\ 1 & 1 \\ 1 & 0 \end{pmatrix},$$ which belongs to $\mathcal{C}(G_2).$

%Now choose a second matrix in $\mathcal{C}(G_n)$ whose top row is identical to the bottom row of the first matrix. We will choose $$P_2=\begin{pmatrix} 1 & 0 \\ 0 & 0 \\ 0 & 1 \\1 &1 \end{pmatrix}.$$

%Next, \textit{stack} the first matrix on top of the second matrix. Using $P_1$ and $P_2$ as defined above, we obtain the following matrix: $$P_{12}=\begin{pmatrix} 0 & 0 \\ 0 & 1 \\ 1 & 1 \\ 1 & 0 \\ 1 & 0 \\ 0 & 0 \\ 0 & 1 \\1 &1\end{pmatrix}.$$

%Finally, insert a column vector of the form $$\begin{pmatrix} 1 &1 & \cdots &0 &0\end{pmatrix}^\top \hspace{3mm}\text{ or }\hspace{3mm}\begin{pmatrix} 0&0& \cdots &1 &1\end{pmatrix}^\top,$$ either as the first or last column of the matrix or in between two other columns. Inserting $\mathbf{c} = \begin{pmatrix} 1 & 1 & 1 & 1 & 0 & 0 & 0 & 0\end{pmatrix}^\top$ between the first and second columns of $P_{12}$, we obtain the matrix
%$$\begin{pmatrix} 0 & 1 & 0 \\ 0 & 1 & 1 \\ 1 &1 & 1 \\ 1 &1 & 0 \\ 1 & 0 & 0 \\ 0 & 0 & 0 \\ 0 &0  & 1 \\ 1 & 0 &1\end{pmatrix},$$ which is clearly an element of $\mathcal{C}(G_3)$.

The formal definition of the stack function makes this intuitive process more precise. In order to ensure a well-defined domain, we must first define which matrices can be stacked on top of one another.

%%%%%%%%%%%%
\begin{defn}
Let $A$ and $B \in \mathcal{C}(G_n)$. We say that $A$ is \textbf{stackable} over $B$ (or $B$ is stackable under $A$) if and only if the last row of $A$ is identical to the first row of $B$. We define the \textbf{stackability set} of a matrix $A$ to be the set $$\mathcal{S}(A) = \{B \in \mathcal{C}(G_n) : A \text{ is stackable over } B \}.$$
\end{defn}
%%%%%%%%%%%%

We now provide the formal definition of the stack function.

%%%%%%%%%%%%
\begin{defn}\label{def-stack}
Let $A\in \mathcal{C}(G_n )$, $B\in \mathcal{S}(A)$, and $k \in \{1,\ldots,n+1\}.$ We define $A\stack_k^1 B = C$ to be the $(2^{n+1})\times( n+1 )$ matrix whose entries are given by:

$$
c_{i,j} = \left\{
        \begin{array}{lll}
	a_{i,j},		& 1\leq i\leq2^n 			&\text{and } j<k \\
	a_{i,j-1},		& 1\leq i\leq2^n 			&\text{and } j>k \\
	b_{i-2^n,j},		& 2^n +1\leq i\leq 2^{n+1}	&\text{and } j<k \\
	b_{i-2^n,j-1},	& 2^n +1\leq i\leq 2^{n+1}	&\text{and } j>k \\
	1,			& 1\leq i \leq 2^n 			&\text{and } j=k \\
	0,			& 2^n + 1 \leq i\leq 2^{n+1}		&\text{and } j=k 			
        \end{array}
    \right.
$$
%for $i,j\in \mathbb{N}$ such that $1\leq i \leq 2^n$ and $1\leq j\leq n$,  
where $a_{i,j}$ and $b_{i,j}$ denote the entries of $A$ and $B$, respectively. We define $A\stack_k^0 B$ in the same manner as $A\stack_k^1 B$, except with the bitwise complement taken of the $k^{th}$ column.
\end{defn}
%%%%%%%%%%%%

Since separability is unaffected by taking bitwise complements of columns, we need only consider $A\stack_k^1 B$ in the results that follow, as similar arguments apply to $A\stack_k^0 B$.  For convenience, given any set $T \subseteq Q_n$ and any $k$ with $1 \leq k \leq n+1$, we will define the notation $T^k$ as follows: \[ T^k = \{q\in T : q<k\}\cup\{q+1 : q\in T \text{ and } q \geq k\}. \]
For example, $\{1, 2, 3, 5\}^3 = \{1, 2, 4, 6\}$.

We will now show that for all $A\in\mathcal{C}(G_n)$ and all $B\in\mathcal{S}(A)$, the character of $C=A\stack_k^1 B$ is uniquely determined by $\character(A)$, $\character(B)$, and $k$. Moreover, we will demonstrate explicitly how $\character (C)$ may be computed given this information. This proof involves several intermediate results, beginning with the lemma below.

%%%%%%%%%%%%
%\begin{defn}
%For $T\subseteq Q_n$, we define $T^k$ to be the set $\{q\in T : q<k\}\cup\{q+1 : q\in T \text{ and } q \geq k\}$, where $k \in \mathbb{Z}^+$ such that $1 \leq k \leq n+1$.
%\end{defn}
%%%%%%%%%%%%

%We shall use this notation throughout both this subsection and the next.

%%%%%%%%%%%%
\begin{lem} \label{gen-nonsep}
Let $P_1 \in \mathcal{C}(G_n)$ and $P_2\in\mathcal{S}(P_1)$. If $T \subseteq Q_{n+1}$ with $T \neq \emptyset$ and $k \notin T$, then $T \notin \character(P_1\stack_k^1 P_2)$.

%$T \subset Q_{n+1}$ such that $k \not
% with characters $C_1$ and $C_2$ respectively.
%Let $T\subseteq Q_n$. Then $T^k\notin \character(P_1\stack_k^1 P_2) $.
\end{lem}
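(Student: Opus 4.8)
The plan is to argue by contradiction, exploiting the fact that the inserted $k$-th column of $C := P_1\stack_k^1 P_2$ is constant ($1$) on the top half and constant ($0$) on the bottom half. Consequently the $k$-th bit changes exactly once in $C$, namely between row $2^n$ and row $2^n+1$; and since the last row of $P_1$ equals the first row of $P_2$, these two rows agree in every column other than $k$. First I would record this structural observation together with its immediate consequence: because $k\notin T$, rows $2^n$ and $2^n+1$ carry the same outcome on $T$, which I will call $t^\ast$, and they also share the same outcome on $(Q_{n+1}-T)-\{k\}$.

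Next, suppose toward a contradiction that $T$ is separable. Since $k\notin T$ we have $k\in Q_{n+1}-T$, so fixing an outcome on $Q_{n+1}-T$ also fixes the $k$-th bit. Let $x$ and $y$ denote the outcomes on $Q_{n+1}-T$ read off from rows $2^n$ and $2^n+1$ respectively; by the observation above they differ only in their $k$-th coordinate (which is $1$ for $x$ and $0$ for $y$), and in particular $x\neq y$. The key step is then to locate $t^\ast$ within the two induced orders $C^{[Q_{n+1}-T,x]}$ and $C^{[Q_{n+1}-T,y]}$. Every row consistent with $x$ has $k$-bit $1$ and so lies in the top half (rows $1$ through $2^n$); as row $2^n$ is consistent with $x$ and is the lowest such row, $t^\ast$ is the \emph{least}-preferred outcome on $T$ in $C^{[Q_{n+1}-T,x]}$. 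Symmetrically, every row consistent with $y$ lies in the bottom half, and row $2^n+1$ is the highest such row, so $t^\ast$ is the \emph{most}-preferred outcome on $T$ in $C^{[Q_{n+1}-T,y]}$.

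Finally, separability of $T$ would force $C^{[Q_{n+1}-T,x]}=C^{[Q_{n+1}-T,y]}$, so the top rows of the two induced orders agree and likewise their bottom rows. But then $t^\ast$ would be simultaneously the top and the bottom outcome on $T$ of a single induced order. Since $T\neq\emptyset$, and $C$ is a preference matrix, this order lists all $2^{|T|}\geq 2$ distinct outcomes on $T$, so its top and bottom entries differ---a contradiction. Hence $T$ is nonseparable.

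I expect the only delicate point to be justifying the positional claims in the third paragraph---that row $2^n$ is the last row consistent with $x$ and row $2^n+1$ the first consistent with $y$. These rely entirely on the monotone ($1\cdots 1\,0\cdots 0$) shape of the inserted column, which cleanly sends every row with a fixed $k$-bit into either the top or the bottom half; once that is in hand, the contradiction is immediate and requires no computation. One could alternatively phrase the top-half claim through Lemma \ref{firstrow}, but the direct comparison of the two boundary rows is shorter and entirely self-contained.
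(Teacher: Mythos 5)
Your proof is correct and takes essentially the same approach as the paper's: both exploit the monotone $(1\cdots 1\,0\cdots 0)$ shape of the inserted column, so that the boundary rows $2^n$ and $2^n+1$ (which agree off column $k$, since the last row of $P_1$ equals the first row of $P_2$) force the common outcome on $T$ to be least preferred in the order induced by fixing the $k$-bit at $1$ and most preferred in the order induced by fixing it at $0$. The only cosmetic difference is that the paper names a second distinct outcome $x_1$ on $T$ to exhibit the explicit reversal $x_1 \succ x_2$ versus $x_2 \succ x_1$, whereas you observe that $t^\ast$ cannot be simultaneously the first and last entry of a single induced order containing $2^{|T|}\geq 2$ distinct outcomes.
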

%%%%%%%%%%%%

%%%%%%%%%%%%
\begin{proof} Without loss of generality, assume $k = n+1$. Let $T \subseteq Q_{n+1}$ be given, with $T \neq \emptyset$ and $k \notin T$. Then $T \subseteq Q_n$. By the definition of the stack function, $P_1\stack_k^1 P_2$ has the form
 \[
P_1\stack_k^1 P_2 = \begin{pmatrix}
\vdots&\vdots&\vdots\\
x_1 & y & 1 \\
\vdots&\vdots&\vdots\\
x_2 & y & 1 \\
x_2 & y &  0 \\
\vdots&\vdots&\vdots\\
x_1 & y &  0 \\
\vdots&\vdots&\vdots\\
\end{pmatrix},
\]
where $x_1$ and $x_2$ are distinct outcomes on $T$ and $y$ is an outcome on $Q_n - T$. (Note that if $T = Q_n$, and therefore $Q_n - T$ is empty, the remainder of the proof still follows, simply ignoring the middle column of $P_1\stack_k^1 P_2$.) Fixing an outcome of $(y,1)$ on $Q_{n+1} - T = (Q_n - T) \cup \{n+1\}$ induces a preference of $x_1 \succ x_2$ on $T$, while fixing an outcome of $(y,0)$ on $Q_{n+1} - T$ induces a preference of $x_2 \succ x_1$ on $T$. Therefore, $T$ is not separable with respect to $P_1\stack_k^1 P_2$, and $T \notin \character(P_1\stack_k^1 P_2)$.
%
%the middle two rows of $P_1\stack_k^1 P_2$ have the form $(x, y, 1)$ and $(x, y, 0)$, where $x$ is an outcome on $T$ and $y$ is an outcome on $Q-T$.
%
%By definition of stackability, the last row of $P_1$ and the first row of $P_2$ are identical. 
%
%
%We denote the portion of that row in the columns indicated by $Q_{n}-T$ as the row vector
%$$x=\begin{pmatrix} x_1 & x_2 & \cdots & x_{n-|T|} \end{pmatrix}.$$
%We define outcomes $x'_1$ and $x_0'$ on $Q_{n+1}-T^k$ to be
%$$x_1'=\begin{pmatrix} x_1 & x_2 & \cdots &x_{k-1}&1 &x_{k}&\cdots& x_{n-|T|}\end{pmatrix}$$
%and
%$$x'_0=\begin{pmatrix} x_1 & x_2 & \cdots &x_{k-1}&0&x_{k}&\cdots& x_{n-|T|}\end{pmatrix}.$$
%Now we define $A$ and $B$ as the induced submatrices $$A=(P_1\stack_k^1 P_2)^{[Q_{n+1}-T^k,x_1']} \text{ and }B=(P_1\stack_k^1 P_2)^{[Q_{n+1}-T^k,x_0']}.$$
%Now, $A$ and $B$ are submatrices of a preference matrix, and so are preference matrices themselves. Observe that the first row of $B$ is identical to the last row of $A$. Since by definition, each row in a preference matrix must be unique, this means that $A\neq B$. Therefore, by definition of separability, the set $T^k$ is not separable with respect to $(P_1\stack_k^1 P_2)$; that is, $T^k\notin \character(P_1\stack_k ^1 P_2)$.
\end{proof}
%%%%%%%%%%%%

%%%%
% Finished here 10/24
%%%%

Lemma \ref{gen-nonsep} implies that the stack function preserves nonseparability. Indeed, for stackable matrices $P_1$ and $P_2$, every element of $\character(P_1\stack_k^1 P_2)$ must include the question, $k$, corresponding to the added column. Therefore, if $T$ is nonseparable with respect to $P_1$ or $P_2$, then $T^k$ is nonseparable with respect to $P_1\stack_k^1 P_2$. (The same conclusion could be made if $T$ was separable, but the point here is that the property of nonseparability is in fact preserved.) Since we have established which sets must be nonseparable in a matrix produced by the stack function, we will now turn our attention to the sets that can be separable.

%%%%%%%%%%%%
%\begin{coro}\label{pres-nonsep} Let $P_1 \in \mathcal{C}(G_n)$ and $P_2\in\mathcal{S}(P_1)$, with characters $C_1$ and $C_2$ respectively. Let $k \in \mathbb{Z}^+$ such that $1 \leq k \leq n+1$. If $T\notin C_1 \cap C_2$, then $T^k\notin \character(P_1\stack_k^1 P_2)$.
%\end{coro}
%%%%%%%%%%%%

%%%%%%%%%%%%
\begin{lem}\label{aug-sep}
Let $P_1 \in \mathcal{C}(G_n)$ and $P_2\in\mathcal{S}(P_1)$, with characters $C_1$ and $C_2$ respectively. Let $1 \leq k \leq n+1$. Then $T \in C_1 \cap C_2$ if and only if $T^k \cup \{k\} \in \character(P_1\stack_k^1 P_2)$.
\end{lem}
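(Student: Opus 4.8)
The plan is to prove both directions at once by analyzing the induced orders on $S := T^k \cup \{k\}$ in the matrix $C := P_1 \stack_k^1 P_2$. As in the proof of Lemma \ref{gen-nonsep}, I would first assume without loss of generality that $k = n+1$, since permuting the inserted column into the last position is a relabeling of the questions that preserves separability and carries $T^k \cup \{k\}$ to $T \cup \{n+1\}$; under this reduction $T^k = T$, so the goal becomes $T \in C_1 \cap C_2$ if and only if $S = T \cup \{n+1\} \in \character(C)$.

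With $k = n+1$, the matrix $C$ consists of $P_1$ (occupying the first $n$ columns) stacked above $P_2$, with the new $(n+1)$-th column equal to $1$ on the top $2^n$ rows and $0$ on the bottom $2^n$ rows. The key structural observation is that $Q_{n+1} - S = Q_n - T$ and the $(n+1)$-th bit is constant on each half, so fixing any outcome $y \in X_{Q_n - T}$ splits the induced submatrix $C^{[Q_{n+1}-S, y]}$ cleanly into two blocks: the rows drawn from the top half, all carrying $(n+1)$-bit $1$ and inducing on $T$ exactly the order $P_1^{[Q_n - T, y]}$, followed by the rows drawn from the bottom half, all carrying $(n+1)$-bit $0$ and inducing on $T$ exactly the order $P_2^{[Q_n - T, y]}$. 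Thus the induced order on $S$ lists every outcome $(t,1)$ in the order dictated by $P_1$ when $y$ is fixed, and then every outcome $(t,0)$ in the order dictated by $P_2$ when $y$ is fixed; the placement of the entire ``$1$'' block above the entire ``$0$'' block is automatic and independent of $y$.

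Once this block factorization is established, both directions are immediate. For the forward direction, if $T \in C_1 \cap C_2$ then $P_1^{[Q_n-T,y]}$ and $P_2^{[Q_n-T,y]}$ are each independent of $y$, so the full induced order on $S$ is independent of $y$ and $S \in \character(C)$. For the converse, if $S \in \character(C)$ then the induced order on $S$ is the same for all $y$; extracting the subsequence of outcomes with $(n+1)$-bit $1$ and deleting that coordinate recovers $P_1^{[Q_n-T,y]}$ and forces it to be independent of $y$, giving $T \in C_1$, while the subsequence of outcomes with $(n+1)$-bit $0$ recovers $P_2^{[Q_n-T,y]}$ and gives $T \in C_2$.

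I expect the main obstacle to be making the block factorization rigorous: I must check carefully that fixing $y$ on $Q_n - T$ selects precisely the $P_1$-rows (respectively the $P_2$-rows) having $Q_n - T = y$, and that their restriction to the columns of $T$ reproduces $P_1^{[Q_n-T,y]}$ (respectively $P_2^{[Q_n-T,y]}$), so that the induced order on $S$ genuinely decouples into the separate $P_1$- and $P_2$-orders. The trivial cases should also be verified to fit the statement, namely $T = \emptyset$ (where $S = \{n+1\}$ is always separable, matching $\emptyset \in C_1 \cap C_2$) and $T = Q_n$ (where $S = Q_{n+1}$ is trivially separable); otherwise the argument is a direct unwinding of Definition \ref{def-sep} applied to the structure of the stack function.
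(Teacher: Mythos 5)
Your proposal is correct and follows essentially the same route as the paper: both reduce to $k = n+1$, establish the block factorization $\bigl(\begin{smallmatrix} P_1^{[Q_n-T,\,y]} & \vec{1} \\ P_2^{[Q_n-T,\,y]} & \vec{0} \end{smallmatrix}\bigr)$ of the induced submatrix, and read off both directions from it (the paper phrases the converse as a contrapositive, and handles $T = \emptyset$ and $T = Q_n$ as the same explicit special cases you flag). No gap; the two arguments differ only in presentation.
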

%%%%%%%%%%%%

%%%%%%%%%%%%
\begin{proof}
We begin by proving that if $T^k \cup \{k\} \in \character(P_1\stack_k^1 P_2)$, then $T \in C_1 \cap C_2$. We will do so by proving the contrapositive: if $T \notin C_1 \cap C_2$, then $T^k \cup \{k\} \notin\character(P_1\stack_k^1 P_2)$.

Suppose $T \notin C_1 \cap C_2$. Without loss of generality, let $T\notin C_1$.  Then there exist outcomes $x$ and $y$ on $Q_n-T$ such that 
$$P_1 ^{[Q_n-T,x]}\neq P_1^{[Q_n-T,y]}.$$ 
Without loss of generality, let $k=n+1$. Then $T^k = T$ and $Q_{n+1} - (T^k \cup \{k\}) = Q_n - T$. Let $$A=(P_1\stack_k^1 P_2)^{[Q_{n+1}-(T^k \cup \{k\}),x]}$$ and $$B=(P_1\stack_k^1 P_2)^{[Q_{n+1}-(T^k \cup \{k\}),y]}.$$ Then, by construction, it may be seen that
$$A=
\begin{pmatrix}
P_1^{[Q_n-T, x]} & \vec{1} \\
P_2^{[Q_n-T, x]} & \vec{0} 
\end{pmatrix}
\hspace{3mm}\text{ and }\hspace{3mm}
B=
\begin{pmatrix}
P_1^{[Q_n-T, y]} & \vec{1} \\
P_2^{[Q_n-T, y]} & \vec{0} 
\end{pmatrix},$$
where $\vec{1}$ and $\vec{0}$ represent column vectors of ones and zeros, respectively, of the proper length.

Now, since $P_1 ^{[Q_n-T,x]}\neq P_1^{[Q_n-T,y]}$, it follows that $A \neq B$. Therefore, by the definition of separability, $T^k\cup \{k\}$ is not separable with respect to $P_1 \stack_k ^1 P_2$, and so $T^k\cup \{k\}\notin \character(P_1\stack_k ^1 P_2)$.

%\vspace{3mm}

Next, we will prove that if $T \in C_1 \cap C_2$, then $T^k \cup \{k\} \in \character(P_1\stack_k^1 P_2)$.

Suppose $T \in C_1 \cap C_2$. Then we have three cases:

\textbf{Case 1:} $T = Q_n$. Then $T^k \cup \{k\} = Q_{n+1}$. Since $P_1 \stack_k^1 P_2$ is a preference matrix on $Q_{n+1}$,  it follows by definition that $Q_{n+1} \in \character (P_1 \stack_k^1 P_2)$.

\textbf{Case 2:} $T = \emptyset$. Then $T^k \cup \{k\} = \{k\}$. Now by construction of $P_1\stack_k ^1 P_2$, $\{k\}$ must be separable, since regardless of the outcome fixed on $Q_{n+1}-\{k\}$, the preference order $1 \succ 0$ is induced on $\{k\}$. Thus we have that $T^k \cup \{k\} \in \character(P_1\stack_k ^1 P_2)$.

\textbf{Case 3:} $T \neq \emptyset$ and $T \neq Q_n$. Since $T \in \character(P_1) \text{ and } T \in \character(P_2)$, it follows that for all outcomes $x$ and $y$ on $Q_n-T$,
$$P_1 ^{[Q_n-T,x]}= P_1^{[Q_n-T,y]} \hspace{3mm}\text{ and }\hspace{3mm}  P_2 ^{[Q_n-T,x]}= P_2^{[Q_n-T,y]}.$$ 
Without loss of generality, let $k=n+1$. Fix arbitrary outcomes $x$ and $y$ on $Q_n -  T$. Let $$A=(P_1\stack_k^1 P_2)^{[Q_{n+1}-(T^k \cup \{k\}),x]}$$ and $$B=(P_1\stack_k^1 P_2)^{[Q_{n+1}-(T^k \cup \{k\}),y]}.$$ As in Case 1,
$$A=
\begin{pmatrix}
P_1^{[Q_n-T, x]} & \vec{1} \\
P_2^{[Q_n-T, x]} & \vec{0} 
\end{pmatrix}
\hspace{3mm}\text{ and }\hspace{3mm}
B=
\begin{pmatrix}
P_1^{[Q_n-T, y]} & \vec{1} \\
P_2^{[Q_n-T, y]} & \vec{0} 
\end{pmatrix}.$$

Now since $P_1 ^{[Q_n-T,x]}= P_1^{[Q_n-T,y]}$ and $P_2 ^{[Q_n-T,x]}= P_2^{[Q_n-T,y]}$, it follows that $A = B$. Since $x$ and $y$ were chosen arbitrarily, the definition of separability implies that $T^k\cup \{k\}$ is separable with respect to $P_1 \stack_k ^1 P_2$. Thus, $T^k\cup \{k\}\in \character(P_1\stack_k ^1 P_2)$.

%\vspace{3mm}

Having proven the implication in both directions, we have established that $T \in C_1 \cap C_2$ if and only if $T^k \cup \{k\} \in \character(P_1\stack_k^1 P_2)$.
\end{proof}
%%%%%%%%%%%%

Lemma \ref{gen-nonsep} establishes that every nonempty set $S \in \character(P_1\stack_k^1 P_2)$ must contain $k$, and therefore can be written in the form $T^k \cup \{k\}$ for some $T \subseteq Q_n$. Moreover, Lemma \ref{aug-sep} demonstrates that a set of this form belongs to $\character(P_1\stack_k^1 P_2)$ if and only if $T$ is an element of both $\character(P_1)$ and $\character(P_2)$. Together, these two results establish the following theorem, which describes the character of the stacked matrix $P_1\stack_k^1 P_2$ in terms of the characters of the two matrices, $P_1$ and $P_2$, used to construct it.

%%%%%%%%%%%%
\begin{thm}\label{stack-char}
Let $P_1\in\mathcal{C}(G_n)$ and $P_2\in\mathcal{S}(P_1)$ and let $\character(P_1)=C_1$ and $\character(P_2)=C_2$.  Let $1\leq k \leq n+1$. Then $$\character(P_1\stack_k^1 P_2) = \{T^k \cup \{k\} : T \in C_1\cap C_2\}\cup\{\emptyset\}.$$
\end{thm}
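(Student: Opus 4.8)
The plan is to prove the stated set equality by double inclusion, treating the theorem as a bookkeeping assembly of the two preceding lemmas, which together do essentially all the work. The one structural fact I would record first is that the assignment $T \mapsto T^k \cup \{k\}$ is a bijection from $\mathcal{P}(Q_n)$ onto the collection of subsets of $Q_{n+1}$ that contain $k$. This holds because $T \mapsto T^k$ fixes every index less than $k$ and shifts every index $\geq k$ up by one, which is the order-preserving bijection from $Q_n$ to $Q_{n+1} - \{k\}$; hence $T^k$ ranges bijectively over subsets of $Q_{n+1} - \{k\}$, and appending $k$ yields exactly the subsets of $Q_{n+1}$ containing $k$. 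In particular, every $S \subseteq Q_{n+1}$ with $k \in S$ can be written uniquely as $S = T^k \cup \{k\}$ for some $T \subseteq Q_n$.

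For the inclusion $\character(P_1\stack_k^1 P_2) \subseteq \{T^k \cup \{k\} : T \in C_1 \cap C_2\} \cup \{\emptyset\}$, I would take an arbitrary $S \in \character(P_1\stack_k^1 P_2)$ and split on whether $S$ is empty. If $S = \emptyset$, it lies in the right-hand side via the explicit $\cup\{\emptyset\}$ term. If $S \neq \emptyset$, then since $S$ is separable, the contrapositive of Lemma \ref{gen-nonsep} forces $k \in S$; by the bijection above I may write $S = T^k \cup \{k\}$ for a unique $T \subseteq Q_n$. Applying Lemma \ref{aug-sep} to this $S$ then yields $T \in C_1 \cap C_2$, so $S$ has the required form.

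For the reverse inclusion I would again split into cases. The set $\emptyset$ is trivially separable with respect to any preference matrix, as noted following Definition \ref{def-sep}, so $\emptyset \in \character(P_1\stack_k^1 P_2)$. For any $S = T^k \cup \{k\}$ with $T \in C_1 \cap C_2$, Lemma \ref{aug-sep} immediately gives $S \in \character(P_1\stack_k^1 P_2)$. Combining the two inclusions establishes the equality.

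Since both lemmas are already in hand, there is no real obstacle; the only point requiring care is recognizing that Lemma \ref{gen-nonsep} is precisely what guarantees that a \emph{nonempty} separable set must contain $k$, which is what makes every such set expressible as $T^k \cup \{k\}$ so that Lemma \ref{aug-sep} applies. I would also be careful to handle the trivial sets explicitly, since the $\cup\{\emptyset\}$ term and the $T = Q_n$ case (which gives $S = Q_{n+1}$) are accounted for slightly differently than the generic nontrivial separable sets.
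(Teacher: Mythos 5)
Your proof is correct and takes essentially the same route as the paper, which itself gives no separate proof but simply observes that Lemma \ref{gen-nonsep} (nonempty separable sets must contain $k$) and Lemma \ref{aug-sep} (the if-and-only-if characterization of sets of the form $T^k \cup \{k\}$) together establish the theorem. Your write-up just makes explicit the bijection $T \mapsto T^k \cup \{k\}$ and the handling of the trivial sets, which the paper leaves implicit.
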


In addition to determining the characters of stacked matrices, the stack function illuminates more general properties of cubic preferences. In particular, observe in Theorem \ref{stack-char} how the question corresponding to the added column $k$ is always separable. In fact, the converse of this result holds as well. In particular, if an individual question is separable with respect to a cubic preference matrix, then that matrix is necessarily stacked.

%%%%%%%%%%%%
\begin{thm}\label{stack-col}
Let $P \in \mathcal{C}(G_n)$, and let $k \in Q_n$. Then the following are equivalent:

\begin{enumerate}  \item The set $\{k\}$ is separable with respect to $P$. 
\item  The $k^{th}$ column of $P$ is of the form  
\[\begin{pmatrix} 1 & 1 & 1 & \cdots & 0 & 0 & 0 \end{pmatrix} ^\top=\begin{pmatrix} \vec{1} & \vec{0} \end{pmatrix}^\top, \] %where $\vec{1}$ and $\vec{0}$  are column vectors of 1s and 0s respectively each with length $2^{n-1}.$ 
or its bitwise complement.
\item The matrix $P$ is equal to $P_1 \stack_k^1 P_2$ or $P_1 \stack_k^0 P_2$ for some $P_1 \in \mathcal{C}(G_{n-1})$ and $P_2 \in \mathcal{S}(P_1)$.
\end{enumerate}
\end{thm}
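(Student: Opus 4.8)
The plan is to prove the three statements equivalent via the cyclic chain $(1) \Rightarrow (2) \Rightarrow (3) \Rightarrow (1)$, two links of which are essentially bookkeeping. For $(2) \Rightarrow (1)$, if the $k$th column is $(\vec{1}\ \vec{0})^\top$ then for every outcome $y$ fixed on $Q_n - \{k\}$ the unique pair of rows agreeing with $y$ off column $k$ has its $1$-entry above its $0$-entry (since every $1$ in column $k$ precedes every $0$), so the induced order on $\{k\}$ is always $1 \succ 0$ and $\{k\}$ is separable; the complementary column gives $0 \succ 1$ throughout. And $(3) \Rightarrow (1)$ is immediate from Theorem \ref{stack-char}: taking $T = \emptyset \in C_1 \cap C_2$ shows $\emptyset^k \cup \{k\} = \{k\} \in \character(P_1 \stack_k^1 P_2)$ (and likewise for $\stack_k^0$).

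The substantive step is $(1) \Rightarrow (2)$, where I would argue that separability of $\{k\}$ forces the $k$th column to be monotone and balanced by tracking its \emph{runs}. Since $P \in \mathcal{C}(G_n)$, consecutive rows differ in exactly one bit, so the value in column $k$ is constant along maximal blocks of rows and changes only at a transition where the flipped bit is $k$; at such a transition the two bordering rows share their outcome $y$ on $Q_n - \{k\}$ and differ only in column $k$, placing the members of that single complement-pair consecutively. Write the column as alternating constant runs $r_1, r_2, \ldots$. If there were at least three runs, the transitions $r_1 \to r_2$ and $r_2 \to r_3$ would flip the $k$-bit in opposite directions, so one yields the induced order $1 \succ 0$ at some outcome $y^*$ and the other yields $0 \succ 1$ at some outcome $y^{**}$; since each outcome on $Q_n$ occurs exactly once, $y^* \neq y^{**}$, and two fixed outcomes on $Q_n - \{k\}$ thus give opposite induced orders on $\{k\}$, contradicting separability. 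Hence there are at most two runs; a single run is impossible, as it would make column $k$ constant and omit half the outcomes of $Q_n$ (cf.\ Lemma \ref{gray-col}). So there are exactly two runs, and because each of the subcubes $\{x : x_k = 1\}$ and $\{x : x_k = 0\}$ contains exactly $2^{n-1}$ outcomes, each run has length $2^{n-1}$, forcing the column to be $(\vec{1}\ \vec{0})^\top$ or its complement.

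Finally, for $(2) \Rightarrow (3)$ I would reconstruct the stack decomposition explicitly. Assuming column $k$ is $(\vec{1}\ \vec{0})^\top$ (the complementary case yields $\stack_k^0$), let $P_1$ and $P_2$ be the top and bottom halves of $P$ with column $k$ deleted. Each half traverses one subcube and, with column $k$ removed, realizes each of the $2^{n-1}$ outcomes on $Q_{n-1}$ exactly once with consecutive rows differing in a single bit, so $P_1, P_2 \in \mathcal{C}(G_{n-1})$. Rows $2^{n-1}$ and $2^{n-1}+1$ of $P$ are consecutive and differ only in column $k$ (the unique transition), so after deleting column $k$ the last row of $P_1$ equals the first row of $P_2$, giving $P_2 \in \mathcal{S}(P_1)$; comparing with Definition \ref{def-stack} then shows $P = P_1 \stack_k^1 P_2$. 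I expect the main obstacle to be the runs argument in $(1) \Rightarrow (2)$---specifically, verifying that a third run genuinely produces two \emph{distinct} fixed outcomes with opposite induced orders (rather than merely two adjacent rows), which is exactly where the Gray-code structure and the fact that each outcome occurs once must be combined carefully.
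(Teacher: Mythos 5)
Your proposal is correct and takes essentially the same approach as the paper: the same cycle of implications, with your run-boundary argument for $1 \Rightarrow 2$ being exactly the paper's contrapositive (your two transitions correspond to the paper's consecutive row pairs $(x,1),(x,0)$ and $(y,0),(y,1)$, with the same ``each outcome occurs once'' argument forcing $x \neq y$), the same top-half/bottom-half reconstruction for $2 \Rightarrow 3$, and the same appeal to Theorem \ref{stack-char} for $3 \Rightarrow 1$. Your additional $2 \Rightarrow 1$ argument is correct but logically redundant given the cycle.
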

%%%%%%%%%%%%

%%%%%%%%%%%%
\begin{proof}
Let $\vec{k}$ denote the $k^{th}$ column of $P$. We will prove that $1 \rightarrow 2$, $2 \rightarrow 3$, and $3 \rightarrow 1$.

For $1 \rightarrow 2$, we will prove the contrapositive---that is, if $\vec{k} \neq \begin{pmatrix} \vec{1} & \vec{0} \end{pmatrix}^\top$ (or its bitwise complement), then $\{k\}$ is nonseparable. Assume then that $\vec{k} \neq \begin{pmatrix} \vec{1} & \vec{0} \end{pmatrix}^\top$.

Without loss of generality, let $k = n$, and let the first entry of $\vec{k}$ be $1$. Since $\vec{k} \neq \begin{pmatrix} \vec{1} & \vec{0} \end{pmatrix}^\top$, we know that the top half of the matrix must include at least one row with an entry of $0$ in column $k$. Let $(x,0)$ be the first such row. There must be some row after $(x,0)$ with an entry of $1$ in column $k$. Let $(y,1)$ be the first such row. Note that $x \neq y$ since since each outcome must appear exactly once in $P$ and only one bit can change between any two rows. Thus, $P$ must have the following form:

$$\begin{pmatrix} 
\vdots & \vdots \\
x & 1 \\
x & 0 \\
\vdots & \vdots \\
y & 0 \\
y & 1 \\
%1 & \vdots \\
\vdots & \vdots \\
\end{pmatrix},$$
where $x$, and $y$ are outcomes on $Q_n - \{k\}$. 

Observe that fixing an outcome of $x$ on $Q_n-\{k\}$ induces a preference of $1 \succ 0$ on $\{k\}$, but fixing an outcome of $y$ on $Q_n-\{k\}$ induces a preference of $0 \succ 1$. Therefore, $\{k\}$ is nonseparable.

% Start here!

For $2 \rightarrow 3$, assume without loss of generality that $k = n$ and $\vec{k} = \begin{pmatrix} \vec{1} & \vec{0} \end{pmatrix}^\top$. Let $P_1 = P^{[\{k\}, 1]}$ and $P_2 = P^{[\{k\}, 0]}$. Then \[ P = \begin{pmatrix}
P_1 & \vec{1}\\
P_2 & \vec{0} 
\end{pmatrix}. \] Since $P \in \mathcal{C}(G_n)$ and the first $2^{n-1}$ rows of $P$ agree on $k$, it must be that each pair of consecutive rows in $P_1$ differ on exactly one question. Thus, $P_1 \in \mathcal{C}(G_{n-1})$. Likewise, $P_2 \in \mathcal{C}{(G_{n-1})}$. Furthermore, since the middle two rows of $P$ differ on $k$, it must be that the last row of $P_1$ is identical to the first row of $P_2$. Therefore, $P_2 \in \mathcal{S}(P_1)$, and it follows that $P = P_1 \stack_k^1 P_2$. 

That $3 \rightarrow 1$ follows immediately from Theorem \ref{stack-char}. \end{proof}

%The reverse implication is trivial, since $\vec{k}$ having the form $\begin{pmatrix} \vec{1} \\ \vec{0} \end{pmatrix}$ implies a preference on $\{k\}$ of $1\succ 0$, regardless of the outcome on $Q_n-\{k\}$. \end{proof}

%%%%%%%%%%%%

The following corollary is an immediate consequence of Theorem \ref{nested}, but may also be proven independently using Theorem \ref{stack-col} and Lemma \ref{gray-col}.

%%%%%%%%%%%%
\begin{coro}
Let $P \in \mathcal{C}(G_n)$, and let $j$, $k \in Q_n$ with $j \neq k$.  If $\{j\} \in \character(P)$, then $\{k\} \notin\character(P)$.
\end{coro}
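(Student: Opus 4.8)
The plan is to give the short argument through Theorem \ref{nested} and then indicate the independent route the corollary advertises. For the first approach, I would argue by contradiction: suppose that both $\{j\}$ and $\{k\}$ lie in $\character(P)$. Since both are separable sets, Theorem \ref{nested} forces $\{j\} \subseteq \{k\}$ or $\{k\} \subseteq \{j\}$. But $\{j\}$ and $\{k\}$ are singletons of distinct elements $j \neq k$, so neither inclusion can hold. This contradiction shows that at most one of the two singletons can be separable, which is precisely the claim. The entire substance of this route is carried by the nestedness theorem, so the argument is immediate once that result is in hand.

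For the independent proof, I would instead appeal to the equivalence of conditions (1) and (2) in Theorem \ref{stack-col}. Assuming $\{j\} \in \character(P)$, that theorem tells me the $j^{th}$ column of $P$ equals $\begin{pmatrix} \vec{1} & \vec{0} \end{pmatrix}^\top$ or its bitwise complement. If I also assume $\{k\} \in \character(P)$, the same theorem constrains the $k^{th}$ column to the same two possibilities. The key observation is that the set of allowed column shapes consists of exactly one vector and its complement; hence columns $j$ and $k$ are either identical (when they take the same shape) or bitwise complements of one another (when they take opposite shapes). Either outcome directly contradicts Lemma \ref{gray-col}, which prohibits both identical columns and complementary columns in any preference matrix. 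The contradiction again yields the corollary.

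Neither route presents a genuine obstacle, since the difficult structural content lives in the earlier results. The only point that requires a moment's care is the bookkeeping in the second approach: I must explicitly note that ``of the form $\begin{pmatrix} \vec{1} & \vec{0} \end{pmatrix}^\top$ or its complement'' collapses to just two possible column vectors, so that forcing two distinct columns into this tiny set immediately triggers one of the two prohibitions in Lemma \ref{gray-col}. I would present the nested argument as the main proof, since it is the most economical, and relegate the column argument to a supporting remark, as the latter most transparently ties the corollary back to the combinatorial facts about Gray graphs.
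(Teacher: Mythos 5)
Your proposal is correct and follows exactly the two routes the paper itself indicates: the paper states this corollary is immediate from Theorem \ref{nested} (your main argument) and may also be proven independently via Theorem \ref{stack-col} and Lemma \ref{gray-col} (your supporting remark). Both of your arguments are sound, and your bookkeeping point---that the allowed column shapes collapse to a single vector and its complement, triggering one of the two prohibitions in Lemma \ref{gray-col}---is precisely the detail the paper leaves implicit.
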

%%%%%%%%%%%%

%%%%%%%%%%%%%
%\begin{proof}
%By contradiction. Suppose that for some $j\neq k$, $j\in \character (P)$ and $k\in \character (P)$. But then, by Theorem \ref{stack-col}, both the $j^{th}$ and $k^{th}$ columns would be of the form \[\begin{pmatrix} 1 & 1 & 1 & \cdots & 0 & 0 & 0 \end{pmatrix} ^\top=\begin{pmatrix} \vec{1} \\ \vec{0} \end{pmatrix},\] or its bitwise complement. But whichever combination of these forms these two columns take, the Gray code would be violated when they both change simultaneously from 1s to 0s or from 0s to 1s, or when one changes from a 0 to a 1 while the other changes from a 1 to a 0. This violation of the Gray code is not allowed for a cubic preference matrix, giving a contradiction and completing the proof.
%\end{proof}
%%%%%%%%%%%%%

%%%%%%%%%%%%%%%%%%%%%%%%%%%%%%%%%%%%%%%%
%%%%%%%%%%%%%%%%%%%%%%%%%%%%%%%%%%%%%%%%
\subsection{The Weave Function}

We will now turn our attention to a second function, called the \emph{weave} function, which maps each individual matrix in $\mathcal{C}(G_n)$ to a larger matrix in  $\mathcal{C}(G_{n+1})$. As with the stack function, we will begin with an example. 

%, complements the stack function by constructing matrices with a different set of properties. For example, while every stacked matrix has a singleton set separable, the weave function produces matrices which may or may not contain a singleton in their characters. It should be noted, however, that there are some matrices which are constructed by both functions. Moreover, these two functions, even together, do not construct all cubic preferences (as noted above). Nonetheless, weave, like stack, lends itself to an induced operation on characters and illuminates further properties of cubic preferences. In contrast to the stack function, which maps two specially chosen matrices to a larger matrix, the weave function maps a \emph{single} matrix to a larger one, and may be performed on \emph{any} (cubic) preference matrix. 

First, we select a preference matrix $P_1 \in \mathcal{C}(G_n)$. Then we duplicate each row to create a $2^{n+1} \times n$ matrix in which rows $i$ and $i+1$ are equal for all odd $i$. Finally, we insert any column of zeros and ones that preserves the Gray code by allowing only one bit to change from each row to the next.

Applying this process to the matrix $$P_1 = \begin{pmatrix} 0 & 0 \\ 0 & 1 \\ 1 & 1 \\ 1 & 0 \end{pmatrix},$$ with the column vector $\begin{pmatrix} 0 & 1 & 1 & 0 & 0 & 1 & 1 & 0\end{pmatrix}^\top$ inserted as the third column,  we obtain the matrix
%Next, duplicate the rows of the matrix to obtain $$P_2 = \begin{pmatrix} 0 & 0 \\0 & 0 \\ 0 & 1 \\0 & 1 \\ 1 & 1 \\1 & 1 \\ 1 & 0 \\  1 & 0 \end{pmatrix}.$$ Finally, insert a column of zeros and ones that preserves the Gray code by allowing only one bit to change from each row to the next. For our example, we will insert the column $\begin{pmatrix} 0 & 1 & 1 & 0 & 0 & 1 & 1 & 0\end{pmatrix}^\top$ after the second column of $P_2$ to obtain the matrix 
$$\begin{pmatrix} 0 & 0 & 0 \\0 & 0 & 1 \\ 0 & 1 & 1 \\0 & 1  & 0\\ 1 & 1 & 0\\1 & 1 & 1 \\ 1 & 0 & 1\\  1 & 0 & 0 \end{pmatrix},$$ which belongs to $\mathcal{C}(G_3)$.

The next definition formalizes this intuitive ``weaving'' process.

%%%%%%%%%%%%
\begin{defn}
Let $A$ be any preference matrix in $\mathcal{C}(G_n)$, and let $k \in \{1, \ldots, n+1\}$. %For $i,j\in \mathbb{N}$ such that $1\leq i \leq 2^n$ and $1\leq j\leq n$,  we define $a_{i,j}$ to be the entries in the $i^{th}$ row and the $j^{th}$ column of $A$. Likewise, for a matrix $B$ in $\mathcal{C}(G_{n+1})$, and for $i,j\in \mathbb{N}$ such that $1\leq i \leq 2^{n+1}$ and $1\leq j\leq n+1$,  we define $b_{i,j}$ to be the entries in the $i^{th}$ row and the $j^{th}$ column of $B$. 
We define $B = \weave_k^1(A)$ to be the $(2^{n+1})\times(n+1)$ matrix whose entries are given by:

$$
b_{i,j} = \left\{
        \begin{array}{lll}
	a_{i/2,j},		& i \text{ even }						&\text{and } j<k \\
	a_{i/2,j-1},		&  i \text{ even }						&\text{and } j>k \\
	a_{(i+1)/2,j},		& i \text{ odd }						&\text{and } j<k \\
	a_{(i+1)/2,j-1},	&  i \text{ odd }						&\text{and } j>k \\
	1,			& i\equiv 0, 1 ~(\text{\em mod}~4) 	&\text{and } j=k \\
	0,			& i\equiv2, 3 ~(\text{\em mod}~4)   	&\text{and } j=k 			
        \end{array},
    \right.
$$
where $a_{i,j}$ denotes the entries of $A$. We define $\weave_k^0(A)$ in the same manner as $\weave_k^1(A)$, except with the bitwise complement taken of the $k^{th}$ column.
\end{defn}
%%%%%%%%%%%%

As with the stack function, we can describe the character of any matrix output by the weave function in terms of the character of the input matrix. As taking the bitwise complement of a column always preserves separability, we need only consider $\weave_k^1(A)$ in the results that follow. Similar results apply to $\weave_k^0(A)$.

%%%%%%%%%%%%
\begin{lem}\label{weave-knonsep}Let $P\in\mathcal{C}(G_n)$. For all $S\in\character(\weave_k^1(P))$ with $S\neq Q_{n+1}$, $k\notin S$.
\end{lem}
%%%%%%%%%%%%

%%%%%%%%%%%%
\begin{proof} Without loss of generality, assume that $k=n+1$. Let $S \subset Q_{n+1}$ with $k \in S$. We will show that $S$ is nonseparable.
Since $S \neq Q_{n+1}$, $Q_{n+1} - S \neq \emptyset$. %Thus, the first two rows of $\weave_k^1(P)$ are \[ \begin{pmatrix} x_1 & y_1 & 1 \\ x_1 & y_1 & 0 \end{pmatrix}, \] where $x_1$ and $y_1$ are outcomes on $S - \{k\}$ and $Q_{n+1} - S$, respectively. 
Moreover, since $Q_{n+1} - S$ must change at some point, $\weave_k^1(P)$ must have the following form: 
\[ \weave_k^1(P) = \begin{pmatrix}  \vdots & \vdots & \vdots \\ x_1 & y & z \\ x_1 & y & 1-z \\ x_2 & y & 1 - z \\ x_2 & y & z \\ \vdots & \vdots & \vdots \end{pmatrix}, \]
where $x_1$ and $x_2$ are outcomes on $Q_{n+1} - S$, $y$ is an outcome on $S - \{k\}$, and $z \in \{0, 1\}$. 
Note that fixing $x_1$ on $Q_{n+1} - S$ induces an ordering of $(y, z) \succ (y, 1 - z)$ on $S$. However, fixing $x_2$ on $Q_{n+1} - S$ induces an ordering of $(y, 1 - z) \succ (y, z)$ on $S$. Therefore, $S$ is nonseparable. \end{proof}

Both the stack and weave functions increase the dimension of the resulting matrix by adding a certain type of column. In the case of the stack function, the question corresponding to this added column must be included in every nontrivial separable set. In the case of the weave function, however, the question corresponding to the added column is \emph{never} included in a separable set (with the exception of $Q_{n+1}$).   Interestingly, while the stack function preserves nonseparability, the weave function preserves separability, as shown below.

%%%%%%%%%%%%
\begin{lem}\label{weave-preservesep}
Let  $P\in\mathcal{C}(G_n)$, and let $T\subseteq Q_{n}$. Then  $T\in \character(P)$ if and only if $T^k\in \character(\weave_k^1(P))$. 
\end{lem}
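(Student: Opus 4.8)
The plan is to prove both directions by analyzing how the weave function transforms the submatrices $P^{[Q_n - T, x]}$ into the corresponding submatrices of $\weave_k^1(P)$. The key structural observation is that the weave function duplicates each row of $P$ and then inserts column $k$ so that within each duplicated pair the new bit takes both values (in the pattern $1,1,0,0$ repeating by the $\bmod\ 4$ rule). Throughout, I would assume without loss of generality that $k = n+1$, so that $T^k = T$ and $Q_{n+1} - T^k = (Q_n - T) \cup \{k\}$. Under this normalization, an outcome on $Q_{n+1} - T^k$ has the form $(x, b)$ where $x$ is an outcome on $Q_n - T$ and $b \in \{0,1\}$ is the value on question $k$.

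The central computation is to show that for any outcome $x$ on $Q_n - T$ and any $b \in \{0,1\}$, the submatrix $(\weave_k^1(P))^{[Q_{n+1} - T^k,\,(x,b)]}$ equals $P^{[Q_n - T, x]}$, possibly with each row duplicated. More precisely, fixing $(x,b)$ on $Q_{n+1} - T^k$ selects exactly those rows of $\weave_k^1(P)$ whose $(Q_n - T)$-part equals $x$ and whose $k$-th bit equals $b$; by the duplication structure of the weave, these rows, read in order and projected onto $T$, reproduce the rows of $P^{[Q_n - T, x]}$ (since each original row of $P$ with $(Q_n-T)$-part $x$ contributes its $T$-part to exactly one of the two fixed values of $b$, and the relative order among such rows is preserved by the weave). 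I would verify this by tracking indices through the piecewise definition of $b_{i,j}$, using that the even/odd split reproduces $A$'s rows and the $\bmod\ 4$ pattern controls which $b$-value each copy receives. The forward direction then follows: if $T \in \character(P)$, then all $P^{[Q_n - T, x]}$ agree, so all $(\weave_k^1(P))^{[Q_{n+1} - T^k,\,(x,b)]}$ agree as well, giving $T^k \in \character(\weave_k^1(P))$. For the converse, I would prove the contrapositive: if $T \notin \character(P)$, there exist $x, y$ on $Q_n - T$ with $P^{[Q_n-T,x]} \neq P^{[Q_n-T,y]}$; comparing the corresponding weave submatrices (for a fixed $b$) shows these differ too, so $T^k \notin \character(\weave_k^1(P))$.

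I expect the main obstacle to be the careful bookkeeping in the index-tracking step—namely, confirming that fixing a value of $k$ together with $x$ on $Q_n - T$ selects a set of rows that, when projected to $T$, exactly reproduces $P^{[Q_n-T,x]}$ in the correct order rather than some permutation or a mix of the two $b$-values. The subtlety is that question $k$ is \emph{not} in $Q_n - T$ in the original matrix, so one must argue that the weave's duplication distributes the $T$-outcomes associated with each $(Q_n-T)$-outcome $x$ consistently across the two values of $b$; the $\bmod\ 4$ alternation is what guarantees each duplicated pair splits as one row with $k$-bit $1$ and one with $k$-bit $0$ in a way compatible across different $x$. Once this row-selection correspondence is pinned down, both implications reduce to the equality (or inequality) of the underlying $P$-submatrices, and the argument closes cleanly by Definition \ref{def-sep}.
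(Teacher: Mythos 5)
Your proposal is correct and takes essentially the same approach as the paper: the paper's proof also rests on the identity $P^{[Q_n - T, x]} = (\weave_k^1(P))^{[Q_{n+1} - T^k,(x,z)]}$ for either value of $z$ (derived there by first observing $(\weave_k^1(P))^{[\{k\},z]} = P$ and composing restrictions, rather than by your direct index tracking), after which both directions follow by comparing submatrices exactly as you describe. The only points to tidy are the superfluous hedge ``possibly with each row duplicated''---fixing the $k$-bit selects exactly one copy of each relevant row, as your own precise statement confirms---and the degenerate cases $T = \emptyset$ and $T = Q_n$, which the paper handles in a separate opening paragraph since Definition~\ref{def-sep} applies only to nonempty proper subsets.
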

%%%%%%%%%%%%

%%%
% Finished up through this point on 10/31
%%%

%%%%%%%%%%%%
\begin{proof}  Let $P_* = \weave_k^1(P)$. If $T = \emptyset$, then the proof is trivial.  Likewise, if $T = Q_n$, note that $Q_n$ is always an element of $\character(P)$, and $Q_n^k$ is always an element of $\character(P_*)$ since $Q_{n+1} - Q_n^k = \{k\}$ and \[ P_*^{[Q_{n+1} - Q_n^k, 0]} = P = P_*^{[Q_{n+1} - Q_n^k, 1]} \] by the definition of the weave function.

Now assume that $T$ is a nonempty, proper subset of $Q_n$. For each direction, we will prove the contrapositive. That is, we will prove that $T \notin \character(P)$ if and only if $T^k \notin \character(P_*)$.

First note that $Q_{n+1} - T^k = (Q_n - T)^k \cup \{k\}$. Also, any outcome on $Q_n - T$ can be viewed as an outcome on $(Q_n - T)^k$ by simply shifting the questions accordingly.  Let $x$ and $z$ be any outcomes on $Q_n -  T$ and $\{k\}$, respectively. As shown in the first paragraph of this proof, $P = P_*^{[Q_{n+1} - Q_n^k,z]} = P_*^{[\{k\},z]}$. Hence, \begin{align*}
P^{[Q_n - T, x]} &= (P_*^{[\{k\},z]})^{[Q_n - T,x]} \\ &= P_*^{[(Q_n - T)^k \cup \{k\},(x,z)]} \\ &= P_*^{[Q_{n+1} - T^k,(x,z)]}.\end{align*}
%Thus, by the definition of the weave function, \[ P^{[Q_n - T, x]} = P_*^{[Q_{n+1} - T^k, (x, z)]} \] for any outcomes $x$ on $Q_n - T$ and $z$ on $\{k\}$. 

Now suppose $T \notin \character(P)$. Then there exist outcomes $x$ and $y$ on $Q_n - T$ such that \[ P^{[Q_n - T, x]} \neq P^{[Q_n - T, y]}, \] which implies that \[ P_*^{[Q_{n+1} - T^k, (x, 1)]} \neq P_*^{[Q_{n+1} - T^k, (y, 1)]}, \] and so $T^k \notin \character(\weave_k^1(P))$.

A similar argument establishes the converse. In particular, if \[ P_*^{[Q_{n+1} - T^k, (x, z)]} \neq P_*^{[Q_{n+1} - T^k, (y, z)]} \] for some outcomes $x$, $y$ on $Q_n - T$ and $z$ on $\{k\}$, then \[ P^{[Q_n - T, x]} \neq P^{[Q_n - T, y]}, \] and so $T \notin \character(P)$. \end{proof}

With the preceding lemmas, we can now prove that the character of the matrix output by the weave function is uniquely determined by the character of the input matrix.

%%%%%%%%%%%%
\begin{thm}\label{weave-char}
Let  $P\in\mathcal{C}(G_n)$ and let $\character(P)=C$ .
%Furthermore, for every $T\in C$, let $T^k=\{q:q\in T \text{ and } q<k\}\cup\{q+1: q\in T \text{ and } q\geq k\}$. (this is defined at the beginning of this section.)
Then, \[\character(\weave_k^1(P))=\{T^k: T\in C\} \cup\{Q_{n+1}\}.\]
\end{thm}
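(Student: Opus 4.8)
The plan is to prove the two set-inclusions that constitute the stated equality, namely that $\character(\weave_k^1(P)) \subseteq \{T^k : T \in C\} \cup \{Q_{n+1}\}$ and that $\{T^k : T \in C\} \cup \{Q_{n+1}\} \subseteq \character(\weave_k^1(P))$. Both directions should follow almost immediately from the two preceding lemmas, so the theorem is essentially a packaging result that collects Lemma \ref{weave-knonsep} and Lemma \ref{weave-preservesep} into a single clean statement.

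For the forward inclusion, I would take an arbitrary $S \in \character(\weave_k^1(P))$ and split into two cases. If $S = Q_{n+1}$, it lies in the right-hand set by definition. If $S \neq Q_{n+1}$, then Lemma \ref{weave-knonsep} guarantees $k \notin S$, so $S \subseteq Q_n$ after re-indexing; more precisely, every such $S$ can be written as $T^k$ for a unique $T \subseteq Q_n$ (the map $T \mapsto T^k$ is the bijection between subsets of $Q_n$ and subsets of $Q_{n+1}$ not containing $k$). Applying Lemma \ref{weave-preservesep} to this $T$, the hypothesis $T^k \in \character(\weave_k^1(P))$ yields $T \in \character(P) = C$, so $S = T^k$ with $T \in C$, as required.

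For the reverse inclusion, I would start with an element of $\{T^k : T \in C\} \cup \{Q_{n+1}\}$. The element $Q_{n+1}$ is trivially separable and hence always lies in $\character(\weave_k^1(P))$. Otherwise the element is $T^k$ for some $T \in C = \character(P)$; applying the other direction of Lemma \ref{weave-preservesep} gives $T^k \in \character(\weave_k^1(P))$ directly. One bookkeeping point worth stating explicitly is why $Q_{n+1}$ needs to be adjoined separately rather than appearing as some $T^k$: since $k \notin T^k$ for any $T \subseteq Q_n$, the full set $Q_{n+1}$ is never of the form $T^k$, which is exactly why the union with $\{Q_{n+1}\}$ is necessary (and also why the $T = Q_n$ case of Lemma \ref{weave-preservesep} produces $Q_n^k = Q_{n+1} - \{k\}$ rather than $Q_{n+1}$ itself).

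I do not anticipate a genuine obstacle here, since both lemmas do the substantive work. The only point demanding a little care is the interplay of the two statements on the trivial sets: I must confirm that the $\emptyset$ case is handled (the empty set is $\emptyset^k = \emptyset \in \{T^k : T \in C\}$ whenever $\emptyset \in C$, which always holds by convention), and that $Q_{n+1}$ is correctly accounted for as a separate term rather than being double-counted or omitted. Making the bijectivity of $T \mapsto T^k$ onto subsets of $Q_{n+1}$ avoiding $k$ explicit is the cleanest way to ensure the forward inclusion captures every separable set exactly once.
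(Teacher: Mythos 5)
Your proof is correct and follows essentially the same route as the paper's: both directions are obtained by combining Lemma \ref{weave-knonsep} (to force $k \notin S$ for nontrivial separable $S$) with the biconditional of Lemma \ref{weave-preservesep}, exactly as the paper does. Your explicit remarks on the bijection $T \mapsto T^k$ and on why $Q_{n+1}$ must be adjoined separately are fine clarifications of the same argument, not a different approach.
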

%%%%%%%%%%%%

%%%%%%%%%%%%
\begin{proof}
%We must show that $\character(\weave_k^1(P))\subseteq\{T^k: T\subseteq Q_n \text{ and } T\in \character(P)\}\cup\{Q_n\}$ and that $\character(\weave_k^1(P))\supseteq \{T^k: T\subseteq Q_n \text{ and } T\in \character(P)\}\cup\{Q_{n+1}\}$.
%
%\vspace{3mm}

First, let $S\in\character(\weave_k^1(P))$. We must show that $S=Q_{n+1}$ or that there exists $T\in C$ such that $T^k=S$. Suppose $S\neq Q_{n+1}$. If $S = \emptyset$, then $S= \emptyset^k$ and $\emptyset \in C$. If $S\neq\emptyset$, then Lemma \ref{weave-knonsep} implies that $k \notin S$. Now let \[ T=\{q:q\in S \text{ and } q<k\}\cup\{q-1:q\in S \text{ and } q>k\}.\] Then, $T^k=S$, and by Lemma \ref{weave-preservesep}, $T\in C$.

For the reverse inclusion, first note that if $S=Q_{n+1}$, then $S\in \character(\weave_k^1(P))$ trivially. On the other hand, if $S=T^k$ for some $T\subseteq Q_n$ with $T\in C$, then Lemma \ref{weave-preservesep} implies that $S\in\character(\weave_k^1(P))$, which completes the proof.
\end{proof}
%%%%%%%%%%%%

%We have now established how the character of the original matrix determines the character of the output matrix for the weave function. We now define an induced weave operation, which Theorem \ref{weave-char} has shown is well-defined.
%
%
%%%%%%%%%%%%%
%\begin{defn} \label{inducedweave}
%Let $C$ be any admissible character such that $\character(P)=C$ for $P\in\mathcal{C}(G_n)$. We define the \textbf{induced weave function} to be
%$$\weavebar_k (C)=\weavebar_k (\character(P))=\character (\weave_k ^1 (P))$$
%\end{defn}
%%%%%%%%%%%%%

Like the stack function, the weave function always yields a matrix with a special column---one whose structure allows us to make conclusions about the separability of certain related sets. It is also the case that the existence of such a column guarantees that the matrix in question is in fact woven. The following result, which is analogous to Theorem \ref{stack-col}, formalizes this claim.

%%%%%%%%%%%%
\begin{thm}\label{weave-col}
Let $P \in \mathcal{C}(G_n)$, and let $k \in Q_n$. Then the following are equivalent: \begin{enumerate}
\item The set $Q_n - \{k\}$ is separable with respect to $P$.
\item The $k^{th}$ column of $P$ is of the form \[ \vec{w} = \begin{pmatrix} 1 & 0 & 0 & 1 & 1 & \cdots & 0 & 0 & 1 \end{pmatrix}^\top\] or its bitwise complement. 
\item  The matrix $P$ is equal to $\weave_k^1(P_1)$ or $\weave_k^0(P_1)$ for some $P_1 \in \mathcal{C}(G_{n-1})$.
\end{enumerate}
%Moreover, for $1\leq j \leq k$ and $j\neq k$, $\{1,2,\ldots,n\}-\{j\} \notin\character(P)$. (moved to following corollary)
\end{thm}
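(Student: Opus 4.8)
The plan is to prove the cycle of implications $(1) \Rightarrow (2) \Rightarrow (3) \Rightarrow (1)$, exactly parallel to the proof of Theorem \ref{stack-col}. The easiest link is $(3) \Rightarrow (1)$: if $P = \weave_k^1(P_1)$ for some $P_1 \in \mathcal{C}(G_{n-1})$, then $Q_{n-1} \in \character(P_1)$ (the full set is trivially separable) and a direct check of the shift operation gives $Q_{n-1}^k = Q_n - \{k\}$, so Theorem \ref{weave-char} immediately yields $Q_n - \{k\} \in \character(P)$. The case $P = \weave_k^0(P_1)$ is identical, since complementing a column preserves character.

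For $(2) \Rightarrow (3)$ I would argue by explicit reconstruction. Assume without loss of generality that $k = n$ and that the last column is $\vec{w}$. The defining feature of $\vec{w}$ is that its value changes between rows $2m-1$ and $2m$ but is constant between rows $2m$ and $2m+1$. Because $P \in \mathcal{C}(G_n)$, exactly one bit changes between consecutive rows; hence within each pair $(2m-1, 2m)$ the changing bit is the $k$-th one, so these two rows agree on $Q_n - \{k\}$, while between pairs the changing bit lies in $Q_n - \{k\}$. Collapsing each pair to its common restriction to $Q_n-\{k\}$ produces a $2^{n-1} \times (n-1)$ matrix $P_1$. I would then verify that $P_1 \in \mathcal{C}(G_{n-1})$---its rows are distinct because those of $P$ are, and consecutive rows differ in exactly one bit because of the between-pair transitions---and that $\weave_k^1(P_1)$ reproduces $P$ column for column. (The complemented column is handled by $\weave_k^0$.)

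The hard part is $(1) \Rightarrow (2)$, and here I would work directly from the separability condition. Separability of $Q_n - \{k\}$ says $P^{[\{k\},1]} = P^{[\{k\},0]}$; that is, the sequence of $Q_n-\{k\}$-outcomes (``bases'') read off the rows with $k$-bit $1$ is identical to the sequence read off the rows with $k$-bit $0$. Assuming without loss of generality that the first $k$-bit is $1$, I would decompose the Hamiltonian path into maximal blocks on which the $k$-bit is constant; within a block the bases form a Gray walk, and at each block boundary the $k$-bit flips while the base is unchanged. Writing the blocks with $k$-bit $1$ as $A_1, A_2, \ldots$ and those with $k$-bit $0$ as $B_1, B_2, \ldots$, the common base-sequence $\sigma$ contains every base exactly once, so the shared base at each boundary occupies a single well-defined position in $\sigma$. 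Matching the position of this base when it is read as the last entry of one block against its position as the first entry of the next block gives the relations $\alpha_j = \beta_{j-1}+1$ and $\beta_j = \alpha_j + 1$ for the cumulative block lengths $\alpha_j, \beta_j$, forcing $\alpha_j = 2j-1$ and $\beta_j = 2j$. This pins the first block to length $1$ (which also follows quickly from Lemma \ref{firstrow}) and every interior block to length $2$; a parity count against $\sum_j |A_j| = \sum_j |B_j| = 2^{n-1}$ rules out the path ending on a block with $k$-bit $0$ and forces the final block to have length $1$. Hence the $k$-bit flips exactly at the odd steps, which is precisely the pattern $\vec{w}$ (or its complement, if the first bit is $0$).

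The main obstacle is the bookkeeping in $(1) \Rightarrow (2)$: translating ``the two copies are traversed in the same base order'' into rigid constraints on the block lengths, and in particular making the position-matching argument airtight and correctly disposing of the parity-forbidden ending case. Everything else reduces to structural facts already in hand---Theorem \ref{weave-char} and the one-bit-change property defining $\mathcal{C}(G_n)$.
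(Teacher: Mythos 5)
Your proposal is correct, and two of its three links --- $(2)\Rightarrow(3)$ by collapsing the paired rows into $P_1$ and $(3)\Rightarrow(1)$ via Theorem \ref{weave-char} with $Q_{n-1}^k = Q_n - \{k\}$ --- coincide with the paper's proof (yours is in fact slightly more careful about verifying $P_1 \in \mathcal{C}(G_{n-1})$, which the paper leaves implicit). The genuine difference is in the hard direction $(1)\Rightarrow(2)$. The paper argues by contradiction through a local pattern analysis of the column $\vec{k}$: an isolated bit (pattern $1\,0\,1$) forces a repeated row; a run of three or more equal bits forces, via the induced order $x \succ y \succ z$ and separability, another repeated row; and the single surviving non-$\vec{w}$ pattern $(1\,1\,0\,0\,\cdots\,1\,1\,0\,0)^\top$ is killed because it makes $x_1$ most preferred under one fixed value of $k$ and least preferred under the other. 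Your argument is instead global: decompose the path into maximal constant-$k$-bit blocks, use the fact that the common restricted matrix $\sigma = P^{[\{k\},1]} = P^{[\{k\},0]}$ lists each base exactly once to match positions across block boundaries, and read off the forced cumulative lengths $\alpha_j = 2j-1$, $\beta_j = 2j$, with parity against $2^{n-1}$ excluding termination on a $k$-bit-$0$ block. Both are sound; your position-matching derivation replaces the paper's case enumeration (including its implicit parity step that rules out columns with exactly one singleton run) with a single uniform mechanism, which makes the role of separability more transparent and is arguably easier to adapt to other graphs, while the paper's version is more concrete and requires no bookkeeping with cumulative sums. The one place to be careful when writing yours up is the claim that injectivity of $\sigma$ justifies equating positions --- that is exactly where ``each base appears exactly once with each $k$-bit'' must be invoked --- but as sketched this step is correct.
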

%%%%%%%%%%%%

%%%%%%%%%%%% 
\begin{proof} We will prove that $1 \rightarrow 2$, $2 \rightarrow 3$, and $3 \rightarrow 1$.

 To prove $1 \rightarrow 2$, let $\vec{k}$ denote the $k^{th}$ column of $P$.  We will assume, to the contrary, that $Q_n - \{k\}$ is separable and that $\vec{k} \neq \vec{w}$ (or its bitwise complement). We will consider cases based on the possible forms of $\vec{k}$. In each case, the same argument applies to the bitwise complement of $\vec{k}$.

\textbf{Case 1:} Suppose $\vec{k}=\begin{pmatrix} \cdots & 1 & 0 & 1 & \cdots \end{pmatrix}^\top$, where the omitted entries may be anything. Then, because $P \in \mathcal{C}(G_n)$ and therefore consecutive rows must differ on exactly one question, $P$ must have the following form for some outcome $z$ on $Q_n-\{k\}$:
$$P = \begin{pmatrix}
\vdots & \vdots \\ 1 & z \\ 0 & z \\ 1 & z \\ \vdots & \vdots
\end{pmatrix}$$

This, however, is a contradiction, since the row $(1, z)$ is repeated.

\textbf{Case 2:} Suppose $\vec{k}=\begin{pmatrix} \cdots & 1 & 1 & 1 & 0 & \cdots \end{pmatrix}^\top$, where, again, the omitted entries may be anything. Then $P$ must be of the following form for some distinct outcomes $x$, $y$, and $z$ on $Q_n-\{k\}$:
$$P =\begin{pmatrix}
\vdots & \vdots \\ 1 & x \\ 1 & y \\ 1 & z \\ 0 & z \\ \vdots & \vdots
\end{pmatrix}$$
Thus, fixing an outcome of $1$ on $\{k\}$ induces the preference order $x \succ y \succ z$ on $Q_n-\{k\}$. Since we have assumed that $Q_n-\{k\}$ is separable, the same preference order must be induced by fixing 0 on $\{k\}$.  Therefore, $P$ must have the following form:
$$P = \begin{pmatrix}
\vdots & \vdots \\ 0 & x \\ \vdots & \vdots \\ 0 & y \\ \vdots & \vdots \\ 1 & x \\ 1 & y \\ 1 & z \\ 0 & z \\ \vdots & \vdots
\end{pmatrix}$$

Observe that if the outcome $0$ occurs on $\{k\}$ in any row between $(0, y)$ and $(1, x)$, then $Q_n-\{k\}$ will no longer be separable. Furthermore, there must exist at least one row between $(0, y)$ and $(1, x)$, in order to preserve the Gray code. The first such row, therefore, must have an outcome of $1$ on $k$ and an outcome of $y$ on $Q_n-\{k\}$, as shown below:
$$\begin{pmatrix}
\vdots & \vdots \\ 0 & x \\ \vdots & \vdots \\ 0 & y \\ 1 & y \\ \vdots & \vdots \\ 1 & x \\ 1 & y \\ 1 & z \\ 0 & z \\ \vdots & \vdots
\end{pmatrix}$$
This, however, is a contradiction, since the row $(1, y)$ is repeated.

\textbf{Case 3:} If $\vec{k}$ does not satisfy the conditions of the previous cases, then it must be the case that there is no entry that is both immediately preceded and followed by different entries (Case 1), nor are there any instances in which three or more consecutive entries are identical (Case 2). This implies that all of the entries in $\vec{k}$ must be preceded by a 1 and followed by a 0, or vice versa. Consider that for any column satisfying this property, all entries---except possibly the first and the last, since they do not have entries both preceding and following them---must come in alternating pairs of identical entries. The weave column $\vec{w}$ satisfies this property, but we have already assumed that $\vec{k} \neq \vec{w}$ (or its bitwise complement). Thus, we must construct a column vector that is not $\vec{w}$, does not contain entries repeated three or more times, and must have pairs of alternating identical entries. The only column vector that satisfies these properties is $\vec{k}=\begin{pmatrix} 1 & 1 & 0& 0 & \cdots & 1 & 1 & 0 & 0 \end{pmatrix}^\top$ or its bitwise complement. Now there are exactly $2^{n-1}$ distinct outcomes on $Q_n-\{k\}$, which we denote $x_1,x_2,\ldots,x_{2^{n-1}}$. In order to preserve the Gray code labeling, these outcomes must be arranged in the following manner within $P$:
$$\begin{pmatrix}
1 & x_1 \\
1 & x_2 \\
0 & x_2 \\
0 & x_3 \\
\vdots & \vdots \\
1 & x_{2^{n-1}-1} \\
1 & x_{2^{n-1}} \\
0 & x_{2^{n-1}} \\
0 & x_1
\end{pmatrix}$$
Observe that fixing an outcome of 1 on $k$ makes $x_1$ the most preferred outcome on $Q_n-\{k\}$, whereas fixing 0 on $k$ makes $x_1$ the least preferred outcome on $Q_n-\{k\}$. Therefore, by definition, $Q_n-\{k\}$ is nonseparable, which contradicts our assumption.

In each of these cases, we obtain a contradiction. Therefore, if $Q_n - \{k\}$ is separable, it must be that $\vec{k}$ is equal to $\vec{w}$ or its bitwise complement.

To establish that $2 \rightarrow 3$, let $P$ be any matrix in $\mathcal{C}(G_n)$ whose $k^{th}$ column has the form $\vec{w} = \begin{pmatrix} 1 & 0 & 0 & 1 & 1 & \cdots & 0 & 0 & 1 \end{pmatrix}^\top$ (or its bitwise complement). In order to preserve the Gray code, rows $i$ and $i + 1$ must agree on $Q_n - \{k\}$ for all odd $i$ (since these rows differ on $\{k\}$).  Thus, we have the equality $P^{[\{k\}, 1]} = P^{[\{k\}, 0]}$.  Call this matrix $P_1$. Then $P = \weave_k^1(P_1)$, as desired.

Finally, for $3 \rightarrow 1$, suppose $P$ is equal to $\weave_k^1(P_1)$ or $\weave_k^0(P_1)$ for some $P_1 \in \mathcal{C}(G_{n-1})$. Since $P_1 \in \mathcal{C}(G_{n-1})$, $Q_{n-1}$ is trivially separable with respect to $P_1$. Moreover, $Q_n - \{k\} = Q_{n-1}^k$, which implies by Theorem \ref{weave-char} that $Q_n - \{k\}$ is separable with respect to $P$.\end{proof}
%%%%%%%%%%%%

The following corollary is immediate from Theorem \ref{nested}, but may also be proven independently using Theorem \ref{weave-col} and Lemma \ref{gray-col}.

%%%%%%%%%%%%
\begin{coro}
Let $P \in \mathcal{C}(G_n)$, and let $j$, $k \in Q_n$ with $j \neq k$. If $Q_n - \{j\} \in\character(P)$, then $Q_n-\{k\} \notin\character(P)$.
\end{coro}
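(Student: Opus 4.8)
The plan is to give two short arguments, mirroring the two routes the paper advertises for this corollary. The quickest route invokes the nestedness of cubic characters (Theorem \ref{nested}). I would argue by contradiction: suppose both $Q_n - \{j\}$ and $Q_n - \{k\}$ lie in $\character(P)$ with $j \neq k$. By Theorem \ref{nested}, any two separable sets must be comparable, so either $Q_n - \{j\} \subseteq Q_n - \{k\}$ or $Q_n - \{k\} \subseteq Q_n - \{j\}$. The first inclusion is equivalent, by taking complements within $Q_n$, to $\{k\} \subseteq \{j\}$, forcing $k = j$; the second is equivalent to $\{j\} \subseteq \{k\}$, again forcing $j = k$. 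Either way we contradict $j \neq k$, so at most one of $Q_n - \{j\}$, $Q_n - \{k\}$ can be separable.

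For the independent argument, I would exploit the column characterization in Theorem \ref{weave-col} together with Lemma \ref{gray-col}, exactly paralleling the corollary that follows Theorem \ref{stack-col}. Again assume for contradiction that $Q_n - \{j\}$ and $Q_n - \{k\}$ are both separable. Applying the equivalence of statements (1) and (2) of Theorem \ref{weave-col} to each, the $j$th column of $P$ equals the weave column $\vec{w}$ or its bitwise complement, and likewise the $k$th column equals $\vec{w}$ or its bitwise complement. I would then split into the two outcomes: if the two columns select the same one of $\{\vec{w}, \overline{\vec{w}}\}$, they are identical; if they select opposite ones, they are bitwise complements of each other. Both possibilities are forbidden by Lemma \ref{gray-col}, which asserts that the columns of a preference matrix are distinct and that no two are bitwise complements. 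This contradiction establishes the claim.

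Neither argument presents a genuine obstacle; the only point requiring care is the complement-reversal step in the first proof---remembering that $S \subseteq T$ is equivalent to $Q_n - T \subseteq Q_n - S$, so that comparability of the complements $Q_n - \{j\}$ and $Q_n - \{k\}$ translates into comparability of the singletons $\{j\}$ and $\{k\}$, which is impossible for distinct $j$ and $k$. In the second proof the only subtlety is to enumerate the column possibilities correctly, noting that $\vec{w}$ and its complement are the sole admissible shapes, so that any two separable-complement columns must coincide or be complementary. I would present the first one-line argument as the main proof and relegate the column-based version to a remark, since it nicely illustrates how Theorem \ref{weave-col} dualizes the stack-based corollary following Theorem \ref{stack-col}.
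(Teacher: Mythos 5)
Your proposal is correct and follows exactly the two routes the paper itself indicates for this corollary: the immediate deduction from Theorem \ref{nested} (comparability of $Q_n - \{j\}$ and $Q_n - \{k\}$ under inclusion forces $j = k$ after complementation), and the independent argument combining the column characterization of Theorem \ref{weave-col} with Lemma \ref{gray-col}. Both of your arguments are sound and simply fill in the details the paper leaves as immediate.
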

%%%%%%%%%%%%

Given the value of $|\mathcal{C}(G_{n-1})|$, it is relatively straightforward calculate the number of matrices in $|\mathcal{C}(G_n)|$ that are generated by the stack and weave functions. However, the fact that $|\mathcal{C}(G_n)|$ is not known in general makes asymptotic analysis more difficult. Results for small $n$ suggest that the proportion of all matrices in $\mathcal{C}(G_n)$ that can be produced by the stack and/or weave functions approaches $0$ as $n$ increases. (For example, this proportion is approximately $0.667$ for $n = 3$, $0.231$ for $n = 4$, and $0.028$ for $n = 5$.) It is worth noting in particular that, by Theorems \ref{stack-char} and \ref{weave-char}, the stack and weave functions can never produce a completely nonseparable preference matrix. In general, it is known (see \cite{hodgeterhaar08}) that as $n\rightarrow \infty$, the probability of a random preference matrix being completely nonseparable approaches 1. Our computational data (Tables 1--4) suggest that this trend is also true for cubic preferences, although we have not yet proved this result. We do know, however, that $\mathcal{C}(G_n)$ always contains at least one completely nonseparable preference matrix. This conclusion follows from recent work by Chew and Warner \cite{chewwarner1}, who proved that the consistency set of any Hamiltonian graph always contains a completely nonseparable preference matrix. Therefore, we can conclude that the stack and weave functions, while useful in generating a wide variety of cubic preferences, cannot generate all such preferences. On the other hand, since completely nonseparable preferences are prevalent, it is a potential benefit that the stack and weave functions avoid them altogether and instead generate preferences within the rarer subclasses of partially separable cubic preferences.

\subsection{Using Stack and Weave to Prove Admissibility}

We will now present a brief example to illustrate how the stack and weave functions can be used to prove the admissibility of higher-dimensional characters. Consider the following character for an election on $7$ questions: \[ C = \{\emptyset, \{7\}, \{2, 7\}, \{2, 3, 7\}, \{2, 3, 4, 6, 7\}, \{1, 2, 3, 4, 6, 7\}, \{1, 2, 3, 4, 5, 6, 7\} \} \] Since $C$ is nested (in the sense of Theorem \ref{nested}), it is possible that $C = \character(P)$ for some $P \in \mathcal{C}(G_7)$. We will suppose that such a matrix $P$ exists and work backwards to determine its form. Note that $Q_7 - \{5\} = \{1, 2, 3, 4, 6, 7\} \in C$. Therefore, Theorem \ref{weave-col} implies that $P = \weave_5^1(P')$ or $P = \weave_5^0(P')$ for some $P' \in \mathcal{C}(G_6)$. Moreover, Theorem \ref{weave-char} implies that \[ \character(P') = \{\emptyset, \{6\}, \{2, 6\}, \{2, 3, 6\}, \{2, 3, 4, 5, 6\}, \{1, 2, 3, 4, 5, 6\} \}. \]  Now note that $Q_6 - \{1\} = \{2,3,4,5,6\} \in \character(P')$. Applying Theorems \ref{weave-col} and \ref{weave-char} again, we can conclude that $P' = \weave_1^1(P'')$ or $P' = \weave_1^0(P'')$ for some $P'' \in \mathcal{C}(G_5)$ with \[ \character(P'') = \{ \emptyset, \{5\}, \{1, 5\}, \{1, 2, 5\}, \{1,2,3,4,5\} \}. \] Since $\{5\} \in \character(P'')$, Theorem \ref{stack-col} implies that $P'' = P_1 \stack_5^1 P_2$ or $P'' = P_1 \stack_5^0 P_2$ for some $P_1 \in \mathcal{C}(G_4)$ and some $P_2 \in \mathcal{S}(P_1)$. Finally, Theorem \ref{stack-char} implies that \[ \character(P_1) \cap \character(P_2) = \{ \emptyset, \{1\}, \{1, 2\}, \{1, 2, 3, 4\} \}. \]

Note that all of these steps are reversible. Therefore, if we can find $P_1$ and $P_2$ satisfying the above condition, we will have proved the admissibility of the original character $C$. It is clear from Table \ref{tableG4p} that there exists a matrix $P_1 \in \mathcal{C}(G_4)$ with $\character(P_1) = \{ \emptyset, \{1\}, \{1, 2\}, \{1, 2, 3, 4\} \}$. Form $P_2$ by taking bitwise complements of one or more of the columns of $P_1$ so that the first row of $P_2$ is equal to the last row of $P_1$. Then \[ \character(P_1) = \character(P_2) = \{ \emptyset, \{1\}, \{1, 2\}, \{1, 2, 3, 4\} \}, \] as desired. Theorems \ref{stack-char} and \ref{weave-char} now imply that if $P = \weave_5^1(\weave_1^1(P_1 \stack_5^1 P_2))$, then $\character(P) = C$. Thus, $C$ is admissible.

\section{Conclusion}\label{sec-conclusion}

In this paper, we introduced a new method of generating multidimensional binary preferences by associating them with Hamiltonian paths in graphs. Applying our method to the hypercube graph, we defined the class of cubic preferences and studied its properties. We showed how the algebraic structure of the hypercube influences the separability structure of the preference matrices it generates, and we proved that the character of any cubic preference matrix must be nested---a significant distinction from the characters of preferences generated by previous methods such as preseparable extensions. Finally, we defined two functions that can be used to construct cubic preferences with predictable characters, and these functions gave us broader insights into the types of characters that can be associated with cubic preferences.

Our work here stopped short of completely classifying cubic preferences, and further research is needed to describe in full generality the kinds of characters that can be associated with cubic preferences. In addition, some interesting combinatorial questions remain open. For example, it seems likely---but has not yet been proved---that for cubic preferences, the probability of complete nonseparability approaches $1$ as $n\rightarrow \infty$.

Beyond cubic preferences, the graph theoretic approach to preference generation provides a new method for addressing the character admissibility problem by allowing for the systematic construction of preference matrices with certain given characters. Different graphs will necessarily lead to different classes of preferences, which could then be studied using the methods developed here.

This work, and the character admissibility problem in general, has potential applications to experimental work involving election simulation. In order to accurately assess the impact of nonseparability on the outcomes of referendum elections, and to test methods for alleviating the separability problem, it is necessary to generate electorates with a diverse range of interdependence structures. Therefore, cubic preferences, and others generated via Hamiltonian paths in graphs, could be incorporated into future simulations to test both the effects of nonseparability and the robustness of alternative voting schemes.

\section*{References}

%%%%%%%%%%%%%%%%%%%%%%%%%%%%%%%%%%%%%%%%%%%%%%%%%%%%%%
%%%%%%%%%%%%%%%%%%%%%%%%%%%%%%%%%%%%%%%%%%%%%%%%%%%%%%

%%%%%%%%%%%%%%%%%%%%%%%%%%%%%%%%%%%%%%%%%%%%%%%%%%%%%%%%%%%%%%%%%%%%%%%
%%%%%%%%%%%%%%%%%%%%%%%%%%%%%%%%%%%%%%%%%%%%%%%%%%%%%%%%%%%%%%%%%%%%%%%
\end{document}